\newtheorem{theorem}{Theorem}
\newtheorem{lemma}{Lemma}
\begin{document}

\begin{frontmatter}

\title{Multipoint flux mixed finite element methods for slightly compressible flow in porous media}

%% Group authors per affiliation:
%\author{Elsevier\fnref{myfootnote}}
%\address{Radarweg 29, Amsterdam}
%\fntext[myfootnote]{Since 1880.}

%% or include affiliations in footnotes:
\author[a]{A. Arrar\'as\corref{cor1}}
\ead{andres.arraras@unavarra.es}
\author[a]{L. Portero}
\ead{laura.portero@unavarra.es}
\cortext[cor1]{Corresponding author.}
\address[a]{Departamento de Ingenier\'{\i}a Matem\'atica e Inform\'atica, Universidad P\'ublica de Navarra, Edificio de Las Encinas, Campus de Arrosad\'{\i}a, 31006 Pamplona (Spain)\\[-0.7ex]}

\begin{abstract}
%\small
In this paper, we consider multipoint flux mixed finite element discretizations for slightly compressible Darcy flow in porous media. The methods are formulated on general meshes composed of triangles, quadrilaterals, tetrahedra or hexahedra. An inexact Newton method that allows for local velocity elimination is proposed for the solution of the nonlinear fully discrete scheme. We derive optimal error estimates for both the scalar and vector unknowns in the semidiscrete formulation. Numerical examples illustrate the convergence behaviour of the methods, and their performance on test problems including permeability coefficients with increasing heterogeneity.
\end{abstract}

\begin{keyword}
mixed finite element \sep multipoint flux approximation \sep nonlinear parabolic equation \sep slightly compressible flow
\MSC[2010] 35K55 \sep 65M12 \sep 65M15 \sep 65M60 \sep 76M10 \sep 76S05
\end{keyword}

\end{frontmatter}

%\linenumbers

\section{Introduction}

Slightly compressible single-phase Darcy flow in porous media is governed by the following nonlinear parabolic initial-boundary value problem \cite{bea:88}
\begin{subequations}\label{ibvp}
	\begin{align}
	&\mathbf{u}=-K\rho(p)\left(\nabla p-\rho(p)\mathbf{g}\right)&&\hspace*{-2cm}\mbox{in\,\,}\Omega\times(0,T],\label{ibvp:a}\\[0.5ex]
	&\phi\,\rho(p)_t+\nabla\cdot\mathbf{u}=f&&\hspace*{-2cm}\mbox{in\,\,}\Omega\times(0,T],\label{ibvp:b}\\[0.5ex]
	&p=0&&\hspace*{-2cm}\mbox{on\,\,}\Gamma_D\times(0,T],\label{ibvp:c}\\
	&\mathbf{u}\cdot\mathbf{n}=0&&\hspace*{-2cm}\mbox{on\,\,}\Gamma_N\times(0,T],\label{ibvp:d}\\
	&p=p_0&&\hspace*{-2cm}\mbox{in\,\,}\Omega\times\{0\},\label{ibvp:e}
	\end{align}
\end{subequations}
where $\Omega\subset\mathbb{R}^d$, $d=2$ or $3$, is a convex polygonal or polyhedral domain with Lipschitz continuous boundary given by $\partial\Omega=\overline{\Gamma}_D\cup\overline{\Gamma}_N$ such that $\Gamma_D\cap\Gamma_N=\emptyset$. In this formulation, the equations (\ref{ibvp:a}) and (\ref{ibvp:b}) represent Darcy's law and the conservation of mass, respectively. The unknowns are the fluid pressure $p(\mathbf{x},t)$ and the Darcy velocity $\mathbf{u}(\mathbf{x},t)$. Additional data include the porosity of the medium $\phi(\mathbf{x})$, the mass flux source term $f(\mathbf{x},t)$, a second-order symmetric and positive definite tensor $K(\mathbf{x})$ representing the rock permeability divided by the fluid viscosity, and the gravitational acceleration vector $\mathbf{g}$. The vector $\mathbf{n}$ denotes the outward unit normal on $\partial\Omega$. The model is closed with an equation of state that determines the fluid density $\rho$ as a function of the pressure $p$, i.e.,
\begin{equation*}
\rho(p)=\rho_{\mathrm{ref}}\,e^{c_f(p-p_{\mathrm{ref}})},
\end{equation*}
where $\rho_{\mathrm{ref}}$ and $p_{\mathrm{ref}}$ are the reference density and pressure, respectively, and $c_f$ is the fluid compressibility constant.

The numerical solution of problem (\ref{ibvp}) has been previously tackled in various works. Most of them are based on mixed finite element discretizations, which are known to provide accurate and locally mass conservative velocities, also handling rough permeability coefficients. Examples include multiscale mortar mixed methods, which were analyzed in \cite{kim:par:tho:whe:07} and further studied in \cite{gan:jun:pen:whe:yot:14} from a computational viewpoint in the case of a diagonal tensor $K$. The full tensor case was treated in \cite{gan:pen:whe:wil:yot:12} using expanded mixed methods. A natural extension of this problem considers an additional nonlinearity for $\mathbf{u}$ in (\ref{ibvp:a}) and is usually referred to as Darcy--Forchheimer model. Theoretical and numerical studies dealing with mixed finite element methods for this generalized model can be found in \cite{dou:pae:gio:93,par:05,rui:pan:17}.

A closely related model to problem (\ref{ibvp}) is given by the so-called Richards' equation \cite{ric:31}, which describes the flow of water in variably saturated soils. Richards' equation results from the combination of Darcy's law and the mass balance equation for water, and involves the volumetric water content and the pressure head as unknown variables. If water is assumed to be incompressible, the pressure formulation of Richards' equation has a similar structure to that of equations (\ref{ibvp:a}) and (\ref{ibvp:b}) combined. However, unlike such equations, it may degenerate in certain regions depending on the saturation of the medium. This fact makes the design and analysis of numerical schemes for solving Richards' equation very challenging. Mixed finite element discretizations for this model have been extensively studied in \cite{arb:whe:zha:96,woo:daw:00,rad:pop:kna:04,rad:pop:kna:08,rad:wan:14}. In most of these works, the use of the so-called Kirchhoff transformation permits to eliminate the nonlinearity present in the difussion term, thus simplifying the convergence analysis of the discretization.

In this work, we discuss multipoint flux mixed finite element (MFMFE) methods for problem (\ref{ibvp}). The MFMFE schemes were designed in close relation to the so-called multipoint flux approximation (MPFA) methods. These latter methods, originally proposed as finite volume discretizations \cite{aav:bar:boe:man:98,aav:02,edw:rog:98,edw:02}, consider sub-face (sub-edge in 2D) fluxes that allow for local flux elimination and reduction to a cell-centered pressure scheme. In the MFMFE framework, a similar elimination is performed by using appropriate finite element spaces and suitable quadrature rules. In particular, the lowest order Brezzi--Douglas--Marini ($\mathcal{BDM}_1$) spaces \cite{bre:dou:mar:85} are used on triangles and convex quadrilaterals \cite{whe:yot:06,kla:rad:eig:08}, while the Brezzi--Douglas--Dur\'an--Fortin ($\mathcal{BDDF}_1$) spaces \cite{bre:dou:dur:for:87} are considered on tetrahedra and hexahedra (in their original \cite{whe:yot:06} and an enhanced version \cite{ing:whe:yot:10}, respectively). Alternative formulations on triangular or quadrilateral grids based on a broken Raviart--Thomas space can be found in \cite{kla:rad:eig:08,bau:hof:kna:10,kla:win:06a,kla:win:06,aav:eig:kla:whe:yot:07}. The subsequent definition of certain quadrature rules permits to formulate the original MPFA methods as mixed finite element schemes. Based on the symmetry of the quadrature formula under consideration, we have a symmetric MFMFE method, valid on simplices and $\mathcal{O}(h^2)$-perturbations of parallelograms \cite{kla:rad:eig:08,kla:win:06a,aav:eig:kla:whe:yot:07} and parallelepipeds \cite{ing:whe:yot:10}, and a non-symmetric MFMFE scheme, designed for general quadrilaterals and hexahedra \cite{kla:win:06,whe:xue:yot:12}.

The MFMFE methods, in both their symmetric and non-symmetric variants, have been extensively used and analyzed for incompressible Darcy flow problems; see the preceding references for the steady-state model and \cite{arr:por:yot:14} for the evolutionary model. In addition, they have been applied to Richards' equation \cite{kla:rad:eig:08} and the poroelasticity problem \cite{whe:xue:yot:14}. Remarkably, some experiments that illustrate the performance of MFMFE methods for the slightly compressible flow problem (\ref{ibvp}) can be found in \cite{whe:xue:yot:12a,gan:whe:yot:15}. Nevertheless, in this latter case, a detailed formulation of the methods and their corresponding convergence analysis is lacking so far.

The aim of this work is to extend the existing formulations of MFMFE methods for incompressible Darcy problems to the slightly compressible case. In doing so, we derive a priori error estimates for the continuous-in-time semidiscrete formulation of the symmetric MFMFE scheme. More specifically, the velocity and pressure variables are shown to be first-order convergent on meshes composed of simplices or $\mathcal{O}(h^2)$-perturbations of parallelograms and parallelepipeds. Remarkably, the analysis considers a direct discretization of the original problem, thus avoiding the use of Kirchhoff-type transformations (which are usual for degenerate problems like Richards' equation, as mentioned above). The subsequent application of a suitable time integrator to the resulting differential problem gives rise to a nonlinear system of algebraic equations involving both velocity and pressure unknowns. Following \cite{gan:jun:pen:whe:yot:14}, the solution of this system is based on an inexact Newton method, derived from the fact that the fluid compressibility constant $c_f$ is small for slightly compressible models like (\ref{ibvp}). Further elimination of the velocity unknowns permits to reduce each Newton-type iteration to the solution of a cell-centered linear system for the pressure unknowns. The proposed methods are tested on a variety of numerical examples, including non-constant permeability tensors, smooth and distorted meshes (that require the use of the non-symmetric variant), and random heterogeneous porous media.

The rest of the paper is organized as follows. In Section \ref{section:mfmfe}, we define the MFMFE method for problem (\ref{ibvp}) on different types of spatial meshes in two and three dimensions. The convergence analysis for the continuous-in-time semidiscrete scheme is developed in Section \ref{section:analysis}. The next section describes the fully discrete scheme and its subsequent reduction to a nonlinear system of residual equations. In Section \ref{section:residual:eqs}, we present an inexact Newton method that yields an efficient solution of the preceding system in terms of the pressure unknowns. Finally, Section \ref{section:numer:examples} illustrates the theoretical results with numerical experiments, and presents and application of the proposed methods on a quarter five-spot configuration problem.

\section{The multipoint flux mixed finite element method}\label{section:mfmfe}

In this section, we define the MFMFE method for (\ref{ibvp}) on simplicial, quadrilateral and hexahedral elements. This method is related to MPFA-type schemes through the use of adequate mixed finite element spaces and a special quadrature rule. Further details are provided in the sequel.

\subsection{The weak formulation}

For a domain $G\subset\mathbb{R}^d$, let $W^{k,p}(G)$ be the standard Sobolev space, with $k\in\mathbb{R}$ and $1\leq p\leq\infty$, endowed with the norm $\|\cdot\|_{k,p,G}$. Let $H^k(G)$ be the Hilbert space $W^{k,2}(G)$, equipped with the norm $\|\cdot\|_{k,G}$. We further denote by $(\cdot,\cdot)_G$ and $\|\cdot\|_G$ the inner product and norm, respectively, in either $L^2(G)$ or $(L^2(G))^d$. The subscript $G$ will be omitted whenever $G=\Omega$. For a section of the domain or element boundary $S\subset\mathbb{R}$, $\langle\cdot,\cdot\rangle_S$ and $\|\cdot,\cdot\|_S$ represent the $L^2(S)$-inner product (or duality pairing) and norm, respectively. We shall also use the space
$$
H(\mbox{div};G)=\{\mathbf{v}\in(L^2(G))^d:\nabla\cdot\mathbf{v}\in L^2(G)\},
$$
with corresponding norm $\|\mathbf{v}\|_{\mathrm{div};\,G}=(\|\mathbf{v}\|^2_G+\|\nabla\cdot\mathbf{v}\|^2_G)^{1/2}$. Finally, if $\chi(G)$ denotes any of the above normed spaces on $G$, with associated norm $\|\cdot\|_{\chi(G)}$, we shall consider $L^q(J;\chi(G)):=L^q([0,T];\chi(G))$ as the space of $\chi$-valued functions $\varphi:[0,T]\rightarrow\chi(G)$, endowed with the norm
$$
\|\varphi\|_{L^q(J;\chi(G))}:=\begin{cases}
\,\left(\displaystyle\int_0^T{\|\varphi(t)\|_{\chi(G)}^q\,dt}\right)^{1/q}&
\mathrm{if}\,\,1\leq q < \infty,\\[2.5ex]
\,\mathrm{ess}\,\sup_{t\in[0,T]}\|\varphi(t)\|_{\chi(G)}
&\mathrm{if}\,\, q = \infty.
\end{cases}
$$

The following assumptions are made on (\ref{ibvp}) for the subsequent convergence analysis: there exist positive constants $\alpha_1$, $\alpha_2$, $\kappa_{\ast}$, $\kappa^{\ast}$, $\gamma_1$, $\gamma_2$ and $\eta$, defined to be independent of the discretization parameters, such that
\begin{enumerate}
	\item[(A1)] $\alpha_1\leq\phi(\textbf{x})\leq\alpha_2$, for any $\mathbf{x}\in\Omega$;\\[-1.5ex]
	\item[(A2)] $\kappa_{\ast}\,\mathbf{\xi}^T\mathbf{\xi}\leq
	\mathbf{\xi}^{T}K(\mathbf{x})\,\mathbf{\xi}\leq
	\kappa^{\ast}\,\mathbf{\xi}^T\mathbf{\xi}$, for any $\mathbf{x}\in\Omega$ and $\mathbf{\xi}\neq\mathbf{0}\in\mathbb{R}^d$;\\[-1.5ex]
	\item[(A3)] $\rho$, $\rho'$ exist, are continuous and $\gamma_1\leq\rho(\cdot),\rho'(\cdot)\leq\gamma_2$; in addition, $\rho''$ exists and $|\rho''(\cdot)|\leq\eta$.
\end{enumerate}
Regarding the validity of these assumptions, note that a bounded and positive porosity, and a bounded and uniformly positive definite permeability tensor are physically reasonable (see, e.g., \cite{bea:88}). Furthermore, the assumption (A3) on the fluid density is applicable to liquids, unless they contain large quantities of dissolved gas \cite{pea:77}. Similar hypotheses are considered in \cite[Section 3.4]{whe:73}.

In this context, the variational formulation of the first-order system (\ref{ibvp}) reads: \emph{Find} $(\mathbf{u},p):[0,T]\rightarrow V\times W$ \emph{such that}
\begin{subequations}\label{weak:mixed:formulation}
	\renewcommand{\theequation}{\theparentequation\alph{equation}}
	\begin{align}
	&(K^{-1}\rho^{-1}(p)\,\mathbf{u},\mathbf{v})=(p,\nabla\cdot\mathbf{v})+(\rho(p)\,\mathbf{g},\mathbf{v}),&&\mathbf{v}\in V,\label{weak:mixed:formulation:a}\\[0.5ex]
	&(\phi\,\rho(p)_t,w)+(\nabla\cdot\mathbf{u},w)=(f,w),&&w\in W,\label{weak:mixed:formulation:b}\\[0.5ex]
	&p(0)=p_{0},&&\label{weak:mixed:formulation:c}\
	\end{align}
\end{subequations}
where $V=\left\{\mathbf{v}\in {H}(\mbox{div};\Omega):\mathbf{v}\cdot\mathbf{n}=0\ \mathrm{on}\ \Gamma_N\right\}$ and $W=L^2({\Omega})$. Throughout this paper, we will denote by $C$ any generic positive constant defined to be independent of the discretization parameters.

\subsection{Mixed finite element spaces}

Let $\mathcal{T}_h$ be a conforming, shape-regular and quasi-uniform partition of $\Omega$ into triangles or convex quadrilaterals, in two dimensions, and tetrahedra or hexahedra, in three dimensions. In all the cases, we denote $h=\max_{E\in\mathcal{T}_h}\mathrm{diam}(E)$. For any element $E\in\mathcal{T}_h$, we introduce a bijective mapping $F_E$ such that $F_E(\hat{E})=E$. We further define, for each mapping $F_E$, the Jacobian matrix $DF_E$ and its determinant $J_E=|\det(DF_E)|$. The inverse mapping is denoted by $F_E^{-1}$ and its Jacobian matrix is given by $DF_E^{-1}(\mathbf{x})=(DF_E(\hat{\mathbf{x}}))^{-1}$ with determinant $J^{-1}_E(\mathbf{x})=1/J_E(\hat{\mathbf{x}})$. For the sake of completeness, we briefly recall in the sequel how to construct the corresponding mixed finite element spaces for the elements under consideration.

\textbf{Simplicial elements.} Let us consider the reference tetrahedron $\hat{E}$ with vertices $\hat{\mathbf{r}}_1=(0,0,0)^T$, $\hat{\mathbf{r}}_2=(1,0,0)^T$, $\hat{\mathbf{r}}_3=(0,1,0)^T$ and $\hat{\mathbf{r}}_4=(0,0,1)^T$. Let $\mathbf{r}_i=(x_i,y_i,z_i)^T$ be the corresponding vertices of $E$, for $i=1,2,3,4$. We define $F_E$ as an affine mapping of the form
\begin{equation*}
F_E(\hat{\mathbf{x}})=\mathbf{r}_1(1-\hat{x}-\hat{y}-\hat{z})+\mathbf{r}_2\hat{x}+\mathbf{r}_3\hat{y}+\mathbf{r}_4\hat{z}.
\end{equation*}
In the case of triangles, we consider a two-dimensional counterpart of these expressions. Let $\hat{V}(\hat{E})$ and $\hat{W}(\hat{E})$ be the finite element spaces on the reference element $\hat{E}$. In particular, we use the $\mathcal{BDM}_1$ spaces on the unit triangle and the $\mathcal{BDDF}_1$ spaces on the unit tetrahedron, i.e.,
\begin{equation*}
\hat{V}(\hat{E})=(\mathbb{P}_1(\hat{E}))^d,\qquad
\hat{W}(\hat{E})=\mathbb{P}_0(\hat{E}),
\end{equation*}
where $\mathbb{P}_k$ denotes the space of polynomials of degree not greater than $k$.

\textbf{Convex quadrilateral elements.} In this case, $\hat{E}$ is the unit square with vertices $\hat{\mathbf{r}}_1=(0,0)^T$, $\hat{\mathbf{r}}_2=(1,0)^T$, $\hat{\mathbf{r}}_3=(1,1)^T$ and $\hat{\mathbf{r}}_4=(0,1)^T$. The corresponding vertices of $E$ are denoted by $\mathbf{r}_i=(x_i,y_i)^T$, for $i=1,2,3,4$. Let $F_E$ be a bilinear mapping given by
$$
F_E(\hat{\mathbf{x}})=\mathbf{r}_1(1-\hat{x})(1-\hat{y})+\mathbf{r}_2\hat{x}(1-\hat{y})+\mathbf{r}_3\hat{x}\hat{y}+\mathbf{r}_4(1-\hat{x})\hat{y}.
$$
On the unit square, we define $\mathcal{BDM}_1$ spaces of the form
\begin{equation*}
\hat{V}(\hat{E})=(\mathbb{P}_1(\hat{E}))^2+r\,\mathrm{curl}(\hat{x}^2\hat{y})+s\,\mathrm{curl}(\hat{x}\hat{y}^2),\qquad
\hat{W}(\hat{E})=\mathbb{P}_0(\hat{E}),\nonumber
\end{equation*}
where $r$ and $s$ are real constants.

\textbf{Hexahedral elements.} In the case of hexahedra, $\hat{E}$ denotes the unit cube with vertices $\hat{\mathbf{r}}_1=(0,0,0)^T$, $\hat{\mathbf{r}}_2=(1,0,0)^T$, $\hat{\mathbf{r}}_3=(1,1,0)^T$, $\hat{\mathbf{r}}_4=(0,1,0)^T$, $\hat{\mathbf{r}}_5=(0,0,1)^T$, $\hat{\mathbf{r}}_6=(1,0,1)^T$, $\hat{\mathbf{r}}_7=(1,1,1)^T$ and $\hat{\mathbf{r}}_8=(0,1,1)^T$. Let $\mathbf{r}_i$ be the corresponding vertices of $E$, for $i=1,2,\ldots,8$. Note that the hexahedra may have non-planar faces. In this case, $F_E$ is a trilinear mapping of the form
\begin{figure}[t!]
	\begin{center}
		\includegraphics[scale=0.25]{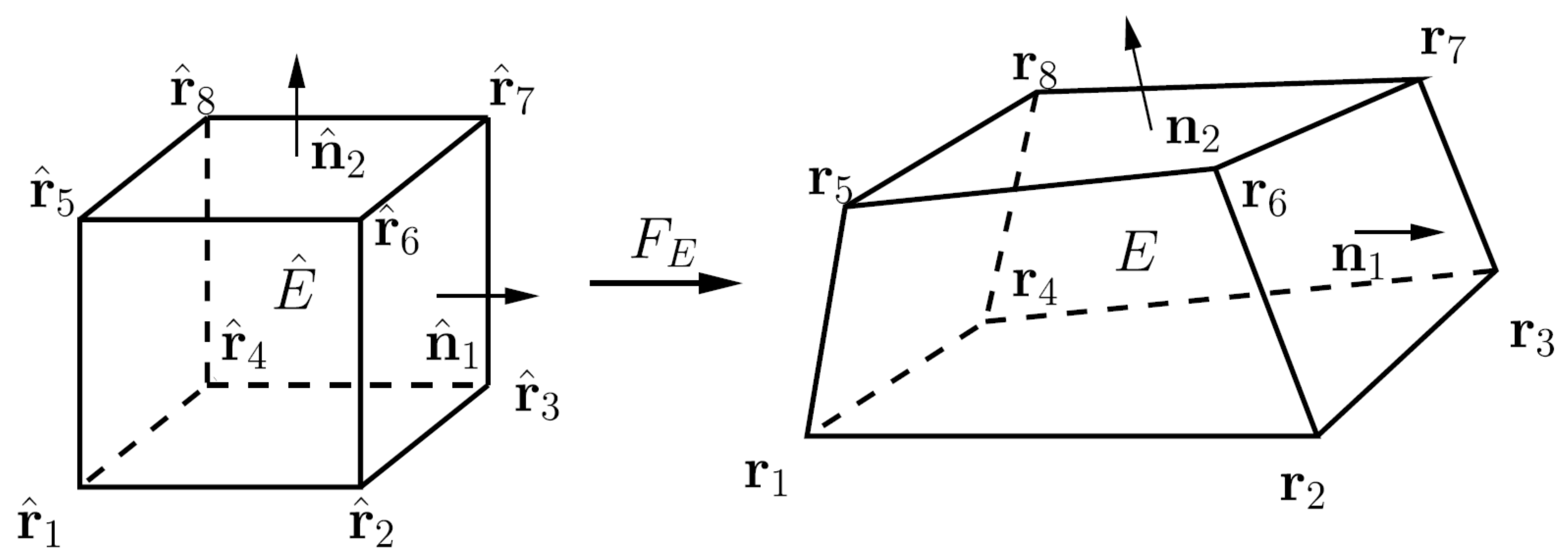}
	\end{center}\caption{Trilinear hexahedral mapping.}\label{fig:mapping}
\end{figure}
\begin{align}\label{trilinear:mapping}
&F_E(\hat{\mathbf{x}})=\mathbf{r}_1(1-\hat{x})(1-\hat{y})(1-\hat{z})+\mathbf{r}_2\hat{x}(1-\hat{y})(1-\hat{z})+\mathbf{r}_3\hat{x}\hat{y}(1-\hat{z})\nonumber\\
&\hspace*{1cm}+\mathbf{r}_4(1-\hat{x})\hat{y}(1-\hat{z})+\mathbf{r}_5(1-\hat{x})(1-\hat{y})\hat{z}+\mathbf{r}_6\hat{x}(1-\hat{y})\hat{z}+\mathbf{r}_7\hat{x}\hat{y}\hat{z}+\mathbf{r}_8(1-\hat{x})\hat{y}\hat{z}.
\end{align}
Figure \ref{fig:mapping} shows the trilinear mapping from the reference element $\hat{E}$ to a physical element $E$. In the figure, the outward unit normal vectors to the faces of $\hat{E}$ and $E$ are denoted by $\hat{\mathbf{n}}_i$ and $\mathbf{n}_i$, respectively, for $i=1,2,\ldots,8$. We shall also use the notation $\hat{\mathbf{n}}_{\hat{e}}$ and $\mathbf{n}_e$ to represent the outward unit normals on faces $\hat{e}\subset\partial\hat{E}$ and $e\subset\partial E$, respectively. On the unit cube, we use the so-called enhanced $\mathcal{BDDF}_1$ spaces introduced in \cite{ing:whe:yot:10}:
\begin{equation*}
\begin{array}{rll}
&\hat{V}(\hat{E})=\mathcal{BDDF}_1(\hat{E})+r_2\,\mathrm{curl}(0,0,\hat{x}^2\hat{z})^T+r_3\,\mathrm{curl}(0,0,\hat{x}^2\hat{y}\hat{z})^T+s_2\,\mathrm{curl}(\hat{x}\hat{y}^2,0,0)^T\\[0.5ex]
&\hspace*{3.5cm}+s_3\,\mathrm{curl}(\hat{x}\hat{y}^2\hat{z},0,0)^T+t_2\,\mathrm{curl}(0,\hat{y}\hat{z}^2,0)^T+t_3\,\mathrm{curl}(0,\hat{x}\hat{y}\hat{z}^2,0)^T,\\[0.75ex]
&\hat{W}(\hat{E})=\mathbb{P}_0(\hat{E}),
\end{array}
\end{equation*}
where the $\mathcal{BDDF}_1(\hat{E})$ space is given by
\begin{align}
&\mathcal{BDDF}_1(\hat{E})=(\mathbb{P}_1(\hat{E}))^3+r_0\,\mathrm{curl}(0,0,\hat{x}\hat{y}\hat{z})^T+r_1\,\mathrm{curl}(0,0,\hat{x}\hat{y}^2)^T+s_0\,\mathrm{curl}(\hat{x}\hat{y}\hat{z},0,0)^T\nonumber\\
&\hspace*{4.2cm}+s_1\,\mathrm{curl}(\hat{y}\hat{z}^2,0,0)^T+t_0\,\mathrm{curl}(0,\hat{x}\hat{y}\hat{z},0)^T+t_1\,\mathrm{curl}(0,\hat{x}^2\hat{z},0)^T,\nonumber
\end{align}
$r_i,s_i$ and $t_i$ being real constants, for $i=0,1,2,3$.

Note that, in all four cases, $\hat{\nabla}\cdot\hat{V}(\hat{E})=\hat{W}(\hat{E})$. Furthermore, on any face (edge in 2D) $\hat{e}\subset\partial\hat{E}$, $\hat{\mathbf{v}}\in\hat{V}(\hat{E})$ is such that $\hat{\mathbf{v}}\cdot\hat{\mathbf{n}}_{\hat{e}}\in\mathbb{P}_1(\hat{e})$ on the reference simplex or the reference square, and $\hat{\mathbf{v}}\cdot\hat{\mathbf{n}}_{\hat{e}}\in\mathbb{Q}_1(\hat{e})$ on the reference cube. Here, $\mathbb{Q}_1(\hat{e})$ denotes the space of bilinear functions on $\hat{e}$. The degrees of freedom for $\hat{\mathbf{v}}\in\hat{V}(\hat{E})$ are chosen to be the values of $\hat{\mathbf{v}}\cdot\hat{\mathbf{n}}_{\hat{e}}$ at the vertices of each face (edge) $\hat{e}$; see Figure \ref{fig:dofs} in the hexahedral case. 

\begin{figure}[t]
	\begin{center}
		\includegraphics[scale=0.26]{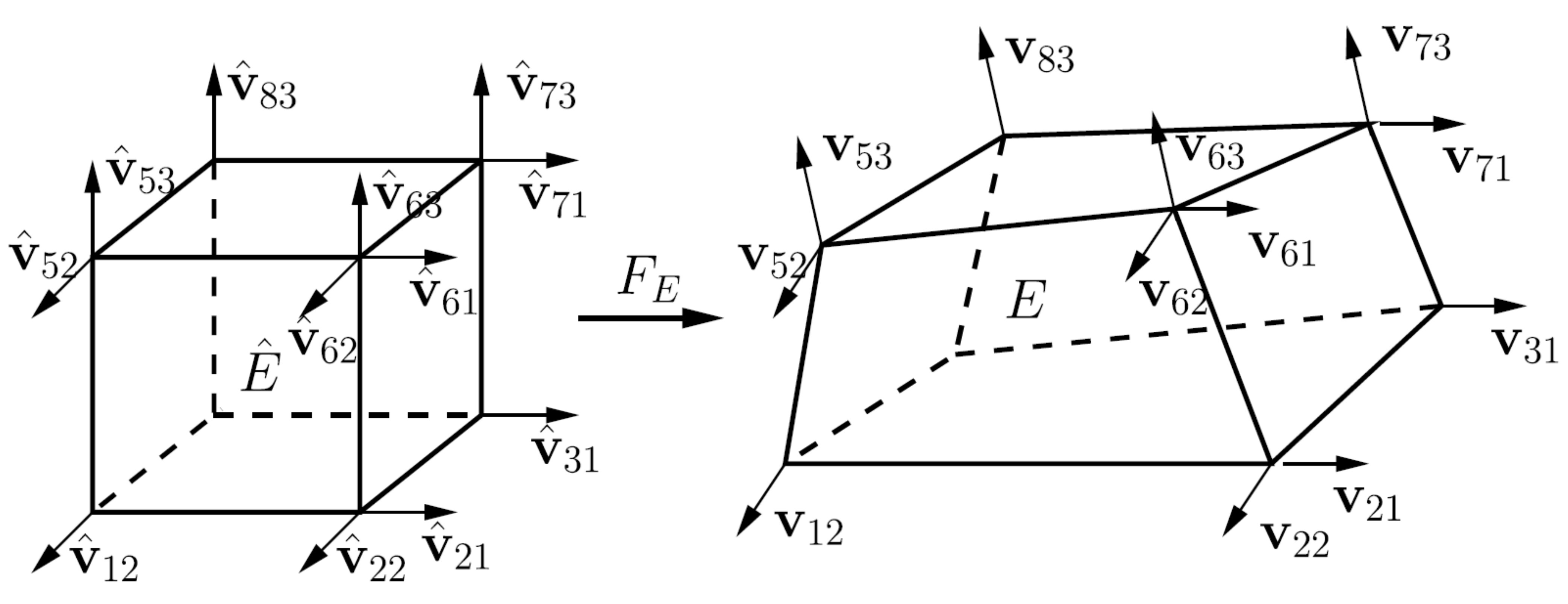}
	\end{center}\caption{Degrees of freedom and basis functions for the enhanced $\mathcal{BDDF}_1$ velocity space on hexahedra.}\label{fig:dofs}
\end{figure}

The spaces $V(E)$ and $W(E)$ on any physical element $E\in\mathcal{T}_h$ are defined via the transformations
\begin{equation}\label{piola}
\mathbf{v}\leftrightarrow\hat{\mathbf{v}}:\mathbf{v}=\left(J_E^{-1}{DF}_E\,\hat{\mathbf{v}}\right)\circ{F}_E^{-1},\qquad
w\leftrightarrow\hat{w}: w = \hat{w}\circ{F}_E^{-1}.
\end{equation}
The former is known as the Piola transformation \cite{tho:77} and it is defined to preserve the continuity of the normal components of velocity vectors across interelement edges. This is a necessary condition that must be fulfilled when building approximations to $H(\mathrm{div};\Omega)$. The Piola transformation satisfies the following properties \cite{bre:for:91}
\begin{equation}\label{piola:properties}
(\nabla\cdot \mathbf{v},w)_{E}=(\hat{\nabla}\cdot\hat{\mathbf{v}},\hat{w})_{\hat{E}},\qquad\langle\mathbf{v}\cdot\mathbf{n}_e,w\rangle_e=
\langle\hat{\mathbf{v}}\cdot\hat{\mathbf{n}}_{\hat{e}},\hat{w}\rangle_{\hat{e}},
\end{equation}
where $(\mathbf{v},w)\in V(E)\times W(E)$ and $(\hat{\mathbf{v}},\hat{w})\in\hat{V}(\hat{E})\times\hat{W}(\hat{E})$. Moreover, it is well known that (cf. \cite{whe:yot:06})
\begin{equation*}
\mathbf{v}\cdot\mathbf{n}_e=\left(\frac{1}{|J_E\,DF^{-T}_E\hat{\mathbf{n}}_{\hat{e}}|_{\mathbb{R}^d}}\,\hat{\mathbf{v}}\cdot\hat{\mathbf{n}}_{\hat{e}}\right)\circ{F}_E^{-1}(\mathbf{x}),\qquad
\nabla\cdot\mathbf{v}=\left(J_E^{-1}\,\hat{\nabla}\cdot\hat{\mathbf{v}}\right)\circ{F}_E^{-1}(\mathbf{x}).
\end{equation*}
For any $\mathbf{v}\in V(E)$, while $J_E$ is constant on simplices, this is not true on quadrilaterals and hexahedra. As a result, $\nabla\cdot\mathbf{v}$ is not constant in the latter cases. Along these lines, $\mathbf{v}\cdot\mathbf{n}_{e}\in\mathbb{P}_1(e)$ on simplices and quadrilaterals, but $\mathbf{v}\cdot\mathbf{n}_{e}\not\in\mathbb{Q}_1(e)$ on hexahedra. Finally, the global MFE spaces $V_h\times W_h\subset V\times W$ on $\mathcal{T}_h$ are given by
\begin{align}
&V_h=\left\{\mathbf{v}\in V: \mathbf{v}|_E\leftrightarrow\hat{\mathbf{v}},\,\hat{\mathbf{v}}\in \hat{V}(\hat{E})\,\,\forall\,E\in\mathcal{T}_h\right\},\nonumber\\[0.5ex]
&W_h=\{w\in W: w|_E\leftrightarrow\hat{w},\,\hat{w}\in\hat{W}(\hat{E})\,\,\forall\,E\in\mathcal{T}_h\}.\nonumber
\end{align}

To conclude, let us now recall how to construct the projection operator onto the space $V_h$. We first introduce a reference element projection operator $\hat{\Pi}:(H^1(\hat{E}))^d\rightarrow\hat{V}(\hat{E})$, which is defined, for any $\hat{\mathbf{q}}\in(H^1(\hat{E}))^d$, as
\begin{equation*}
\langle(\hat{\Pi}\hat{\mathbf{q}}-\hat{\mathbf{q}})\cdot\hat{\mathbf{n}}_{\hat{e}},\hat{q}_1\rangle=0,%\qquad\forall\,\hat{e}\subset\partial\hat{E}
\end{equation*}
on any edge $\hat{e}\subset\partial\hat{E}$, where $\hat{q}_1\in\mathbb{P}_1(\hat{e})$. Then, the global projection operator $\Pi_h:(H^1(\Omega))^d\,\cap\,V\rightarrow V_h$ is locally defined on each element $E$ via the Piola transformation (\ref{piola}), i.e., for any $\mathbf{q}\in(H^1(\Omega))^d\cap V$, $\Pi_h\mathbf{q}|_E\leftrightarrow\widehat{\Pi_h\mathbf{q}}=\hat{\Pi}\hat{\mathbf{q}}\in\hat{V}(\hat{E})$. Based on the previous expressions, it can be deduced that
\begin{equation}\label{div:projection}
(\nabla\cdot(\Pi_h\mathbf{q}-\mathbf{q}),w)=0\qquad\forall\,w\in W_h.
\end{equation}
In addition, we also require the standard $L^2(\Omega)$-projection operator onto the space $W_h$. To define it, we first introduce the $L^2(\hat{E})$-projection $\hat{\mathcal{P}}:L^2(\hat{E})\rightarrow\hat{W}(\hat{E})$ satisfying, for any $\hat{\varphi}\in\hat{W}(\hat{E})$,
\begin{equation*}
(\hat{\varphi}-\hat{\mathcal{P}}\hat{\varphi},\hat{w})_{\hat{E}}=0\qquad\forall\,\hat{w}\in\hat{W}(\hat{E}).
\end{equation*}
Then, we let $\mathcal{P}_h:L^2(\Omega)\rightarrow W_h$ be the $L^2(\Omega)$-projection operator, which is locally defined on each element $E$, for any $\varphi\in L^2(\Omega)$, as $\mathcal{P}_h\varphi|_E=\hat{\mathcal{P}}\hat{\varphi}\circ F_E^{-1}$. It is not difficult to see that, due to (\ref{piola:properties}),
\begin{equation}\label{l2:projection:property}
(\varphi-\mathcal{P}_h\varphi,\nabla\cdot\mathbf{v})=0\qquad\forall\,\mathbf{v}\in V_h.
\end{equation}

\subsection{A quadrature rule}

In the mixed variational formulation (\ref{weak:mixed:formulation}), it is necessary to compute an integral of the form $(K^{-1}\rho^{-1}(p)\,\mathbf{q},\mathbf{v})$, for $\mathbf{q},\mathbf{v}\in V_h$. In doing so, the MFMFE method considers a quadrature rule that allows for local velocity elimination. This idea has been previously used by several authors in the context of elliptic problems.

In the case of the Laplace operator on triangular and regular tetrahedral grids, the first successful attempt to eliminate the velocity unknowns in order to derive a system for the pressures was proposed in \cite{bar:mai:oud:96}. In this work, a cell-centered finite volume scheme with a 4-cell stencil was derived from the lowest order Raviart--Thomas mixed formulation by using a suitable quadrature rule (defined to be exact for constant functions). Subsequently, this technique was extended to reaction--diffusion problems involving a diagonal tensor coefficient in \cite{mic:sac:sal:01}. In this case, the use of different quadrature formulas led to a family of cell-centered finite volume schemes with a 4-cell stencil. More recently, an overview of elimination methods for the Poisson equation on triangular and tetrahedral elements has been provided in \cite{bre:for:mar:06}. In this work, the authors further show the close relationship between the derived schemes and some classical finite volume formulations.

Regarding the use of rectangular grids, these strategies were first addressed in \cite{rus:whe:83}. In the diagonal tensor case, the authors showed that the lowest order Raviart--Thomas mixed method could be reduced to a cell-centered finite difference scheme with a 5-cell stencil by using a combination of the midpoint and trapezoidal rules. Later on, the newly derived scheme was proven to preserve the convergence properties of the original mixed finite element method in \cite{wei:whe:88}. These ideas were extended to the (possibly discontinuous) full tensor case in \cite{arb:whe:yot:97}. In doing so, the expanded mixed finite element formulation was considered in combination with similar quadrature rules to obtain a cell-centered scheme with a 9-cell stencil on rectangles and a 19-cell stencil on rectangular parallelepipeds. The generalization to smooth logically rectangular and cubic grids was considered in \cite{arb:daw:kee:whe:yot:98}.

In our setting, the integration on each element $E\in\mathcal{T}_h$ is performed by mapping to the reference element $\hat{E}$, where this quadrature rule is defined, i.e.,
\begin{equation*}
(K^{-1}\rho^{-1}(p)\,\mathbf{q},\mathbf{v})_{E}=\left(J_E^{-1}DF_E^TK^{-1}\rho^{-1}(p)\,DF_E\,\hat{\mathbf{q}},\hat{\mathbf{v}}\right)_{\hat{E}}
=:(\mathcal{K}_E^{-1}\hat{\mathbf{q}},\hat{\mathbf{v}})_{\hat{E}},
\end{equation*}
for any $\mathbf{q}$, $\mathbf{v}\in V_h$ and $\hat{\mathbf{q}}$, $\hat{\mathbf{v}}\in\hat{V}(\hat{E})$, where $\mathcal{K}_E^{-1}=J_E^{-1}DF_E^TK^{-1}\rho^{-1}(p)\,DF_E$. The quadrature rule on $E\in\mathcal{T}_h$ is given by the trapezoidal rule, i.e.,
\begin{equation*}
(K^{-1}\rho^{-1}(p)\,\mathbf{q},\mathbf{v})_{Q,E}:=(\mathcal{K}_E^{-1}\hat{\mathbf{q}},\hat{\mathbf{v}})_{\hat{Q},\hat{E}}:=
\frac{|\hat{E}|}{n_v}\sum_{i=1}^{n_v}\mathcal{K}_E^{-1}(\hat{\mathbf{r}}_i)\hat{\mathbf{q}}(\hat{\mathbf{r}}_i)\cdot\hat{\mathbf{v}}(\hat{\mathbf{r}}_i),
\end{equation*}
where $|\hat{E}|$ is the volume (area) of $\hat{E}$ and $n_v$ denotes the number of vertices of $\hat{E}$ (i.e., $n_v=3$ for the unit triangle, $n_v=4$ for the unit square or the unit tetrahedron and $n_v=8$ for the unit cube). Hence, the global quadrature rule is defined as
\begin{equation}\label{global:quadrature}
(K^{-1}\rho^{-1}(p)\,\mathbf{q},\mathbf{v})_{Q}:=\sum_{E\in\mathcal{T}_h}(K^{-1}\rho^{-1}(p)\,\mathbf{q},\mathbf{v})_{Q,E}.
\end{equation}
Mapping back to the physical element $E$, we obtain
\begin{equation}\label{symmetric:quadrature:physical}
(K^{-1}\rho^{-1}(p)\,\mathbf{q},\mathbf{v})_{Q,E}=\frac{1}{n_v}\sum_{i=1}^{n_v}J_E(\hat{\mathbf{r}}_i){K}^{-1}(\mathbf{r}_i)\rho^{-1}(p(\mathbf{r}_i))\,\mathbf{q}(\mathbf{r}_i)\cdot\mathbf{v}(\mathbf{r}_i).
\end{equation}
Note that the expressions (\ref{global:quadrature})-(\ref{symmetric:quadrature:physical}) induce a symmetric discrete bilinear form in $V_h$. Following \cite[Lemma 2.4]{whe:yot:06} and \cite[Lemma 2.6]{ing:whe:yot:10}, together with the hypotheses (A2)-(A3), it is easy to check that such a bilinear form is coercive in $V_h$, i.e.,
\begin{equation}\label{coercivity:quadrature}
(K^{-1}\rho^{-1}(p)\,\mathbf{q},\mathbf{q})_Q\geq C\|\mathbf{q}\|^2\qquad\forall\,\mathbf{q}\in V_h.
\end{equation}
As a result, $(K^{-1}\rho^{-1}(p)\,\mathbf{q},\mathbf{v})_Q$ is an inner product and $(K^{-1}\rho^{-1}(p)\,\mathbf{q},\mathbf{q})_Q^{1/2}$ is a norm in $V_h$. Hence, the continuity in $V_h$ is also satisfied, i.e.,
\begin{equation}\label{continuity:quadrature}
|(K^{-1}\rho^{-1}(p)\,\mathbf{q},\mathbf{v})_Q|\leq C\|\mathbf{q}\|\|\mathbf{v}\|\qquad\forall\,\mathbf{q},\mathbf{v}\in V_h.
\end{equation}

\subsection{The semidiscrete scheme}

The MFMFE approximation to (\ref{weak:mixed:formulation}) is given by: \emph{Find} $(\mathbf{u}_h,p_h):[0,T]\rightarrow {V_h}\times {W_h}$ \emph{such that}
\begin{subequations}\label{mfmfe:method}
	\renewcommand{\theequation}{\theparentequation\alph{equation}}
	\begin{align}
	&(K^{-1}\rho^{-1}(p_h)\,\mathbf{u}_h,\mathbf{v})_Q=(p_h,\nabla\cdot\mathbf{v})
	+(\rho(p_h)\,\mathbf{g},\mathbf{v}),&&\mathbf{v}\in V_h,\label{mfmfe:method:a}\\[0.5ex]
	&(\phi\,\rho(p_h)_t,w)+(\nabla\cdot\mathbf{u}_h,w)=(f,w),&&w\in W_h,\label{mfmfe:method:b}\\[0.5ex]
	&p_h(0)=\tilde{p}_h(0),&&\label{mfmfe:method:c}\
	\end{align}
\end{subequations}
where $\tilde{p}_h(0)$ denotes the elliptic MFE projection of $p_0$ (to be defined below). Note that the initial condition $p_h(0)$ determines $\mathbf{u}_h(0)$ through (\ref{mfmfe:method:a}). As mentioned above, in the context of incompressible flow problems, this method is sometimes referred to as the symmetric MFMFE scheme, since the discrete bilinear form given by (\ref{global:quadrature})-(\ref{symmetric:quadrature:physical}) is symmetric (see, e.g., \cite{whe:xue:yot:12,arr:por:yot:14}).

\section{Convergence analysis of the semidiscrete scheme}\label{section:analysis}

In this section, we obtain a priori error estimates for the velocity and pressure variables of the MFMFE semidiscrete formulation (\ref{mfmfe:method}).

\subsection{The elliptic mixed finite element projection}

Let us first define an elliptic projection operator into the mixed finite element spaces. Following \cite{kim:par:tho:whe:07,whe:73}, such a projection is given by the map: \emph{Find} $(\tilde{\mathbf{u}}_h,\tilde{p}_h):[0,T]\rightarrow {V_h}\times{W_h}$ \emph{such that}
\begin{subequations}\label{elliptic:projection}
	\renewcommand{\theequation}{\theparentequation\alph{equation}}
	\begin{align}
	&(K^{-1}\rho^{-1}(p)\,\tilde{\mathbf{u}}_h,\mathbf{v})_Q=(\tilde{p}_h,\nabla\cdot\mathbf{v})+(\rho(p)\,\mathbf{g},\mathbf{v}),&&\mathbf{v}\in V_h,\label{elliptic:projection:a}\\[0.5ex]
	&(\nabla\cdot\tilde{\mathbf{u}}_h,w)=(f-\phi\,\rho(p)_t,w),&&w\in W_h,\label{elliptic:projection:b}\\[0.5ex]
	&(\tilde{p}_h(0),w)=(p_0,w),&&w\in W_h,\label{elliptic:projection:c}\
	\end{align}
\end{subequations}
Note that the pair $(\tilde{\mathbf{u}}_h,\tilde{p}_h)$ is precisely the solution of the mixed finite element approximation to a continuous elliptic problem whose exact solution is $(\mathbf{u},p)$. Subtracting (\ref{elliptic:projection}) from (\ref{weak:mixed:formulation}), we get the error equations
\begin{align*}
&(K^{-1}\rho^{-1}(p)\,\mathbf{u},\mathbf{v})
-(K^{-1}\rho^{-1}(p)\,\tilde{\mathbf{u}}_h,\mathbf{v})_Q
=(p-\tilde{p}_h,\nabla\cdot\mathbf{v}),&&\mathbf{v}\in{V_h},\\[0.5ex]
&(\nabla\cdot(\mathbf{u}-\tilde{\mathbf{u}}_h),w)=0,&&w\in W_h,\\[0.5ex]
&(p(0)-\tilde{p}_h(0),w)=0,&&w\in W_h.
\end{align*}
The local quadrature error on each element is defined to be
$$
\sigma_E(\mathbf{q},\mathbf{v}):=(\mathbf{q},\mathbf{v})_E-(\mathbf{q},\mathbf{v})_{Q,E}
$$
in such a way that $\sigma(\mathbf{q},\mathbf{v})|_E:=\sigma_E(\mathbf{q},\mathbf{v})$ represents the global quadrature error. Taking into account (\ref{div:projection}) and (\ref{l2:projection:property}), the previous equations can be rewritten as
\begin{subequations}\label{error:elliptic}
	\renewcommand{\theequation}{\theparentequation\alph{equation}}
	\begin{align}
	&(K^{-1}\rho^{-1}(p)(\Pi_h\mathbf{u}-\tilde{\mathbf{u}}_h),\mathbf{v})_Q
	=(\mathcal{P}_hp-\tilde{p}_h,\nabla\cdot\mathbf{v})\nonumber\\
	&\hspace*{2cm}-(K^{-1}\rho^{-1}(p)\,\mathbf{u},\mathbf{v})
	+(K^{-1}\rho^{-1}(p)\,\Pi_h\mathbf{u},\mathbf{v})_Q,
	&&\mathbf{v}\in{V_h},\label{error:elliptic:a}\\[0.5ex]
	&(\nabla\cdot(\Pi_h\mathbf{u}-\tilde{\mathbf{u}}_h),w)=0.&&w\in W_h,\label{error:elliptic:b}
	\end{align}
\end{subequations}

In the sequel, we recall the error estimates for the elliptic projection (\ref{elliptic:projection}). The results assume certain restrictions on the element geometry (in the case of quadrilaterals and hexahedra), which are described in the sequel. Following the terminology from \cite{whe:yot:06,ing:whe:yot:10,whe:xue:yot:12b}, we call generalized quadrilaterals the (possibly non-planar) faces of a hexahedral element $E$ defined via a trilinear mapping $F_E$ of the form (\ref{trilinear:mapping}). A generalized quadrilateral with vertices $\mathbf{r}_1$, $\mathbf{r}_2$, $\mathbf{r}_3$ and $\mathbf{r}_4$ is called an $h^2$-parallelogram if \cite{ewi:liu:wan:99}
\begin{equation*}
|\mathbf{r}_{1}-\mathbf{r}_{2}+\mathbf{r}_{3}-\mathbf{r}_{4}|_{\mathbb{R}^d}\leq Ch^2.
\end{equation*}
Elements of this kind are obtained by uniform refinements of a general quadrilateral grid. Furthermore, a hexahedral element is called an $h^2$-parallelepiped if all of its faces are $h^2$-parallelograms. Based on (\ref{trilinear:mapping}), this condition implies that $\partial_{\hat{x}\hat{y}}F_E$, $\partial_{\hat{x}\hat{z}}F_E$ and $\partial_{\hat{y}\hat{z}}F_E$ are $\mathcal{O}(h^2)$.

In the following lemma, $W^{\alpha,\infty}_{\mathcal{T}_h}$ denotes a space consisting of functions $\varphi$ such that $\varphi|_E\in W^{\alpha,\infty}(E)$ for all $E\in\mathcal{T}_h$, $\alpha$ being an integer.

\begin{lemma}\label{lemma:velocity:pressure:elliptic:projection}
	If $K^{-1}$ and $\rho^{-1}\in W^{1,\infty}_{\mathcal{T}_h}$, then the velocity $\tilde{\mathbf{u}}_h$ and the pressure $\tilde{p}_h$ of the MFMFE elliptic projection (\ref{elliptic:projection}), with the quadrature rule (\ref{global:quadrature})-(\ref{symmetric:quadrature:physical}), satisfy, for all $t\in[0,T]$,
	\begin{align}
	&\|(\mathbf{u}-\tilde{\mathbf{u}}_h)(t)\|\leq Ch\,\|\mathbf{u}\|_1,\label{u:elliptic}\\[0.5ex]
	&\|(p-\tilde{p}_h)(t)\|\leq Ch\,(\|\mathbf{u}\|_1+\|p\|_1)\label{p:elliptic}
	\end{align}
	on simplices, $h^2$-parallelograms and $h^2$-parallelepipeds, where $C$ is a positive constant, defined to be independent of $h$.
\end{lemma}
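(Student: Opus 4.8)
The plan is to reduce the error estimates for the MFMFE elliptic projection to the known estimates for incompressible Darcy flow, since (\ref{elliptic:projection}) has exactly the structure of a steady-state MFMFE problem with permeability tensor $\tilde{K} := K\rho(p)$ (evaluated at the fixed time $t$) and source term $f - \phi\,\rho(p)_t$. Under assumptions (A2)--(A3), the tensor $\tilde{K}$ is symmetric, uniformly positive definite, and bounded, with $\tilde{K}^{-1} = K^{-1}\rho^{-1}(p)$; moreover $K^{-1}\rho^{-1}(p)\in W^{1,\infty}_{\mathcal{T}_h}$ by hypothesis. The gravity term $(\rho(p)\,\mathbf{g},\mathbf{v})$ is a harmless bounded linear functional on $V_h$. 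Hence the analysis from \cite{whe:yot:06,ing:whe:yot:10} (and its evolutionary adaptation in \cite{arr:por:yot:14}) applies verbatim to each fixed $t\in[0,T]$, and the constant $C$ in those estimates depends only on $\kappa_\ast$, $\kappa^\ast$, $\gamma_1$, $\gamma_2$, the $W^{1,\infty}$-seminorms of $K^{-1}$ and $\rho^{-1}$, and the shape-regularity constants of $\mathcal{T}_h$ --- all independent of $h$.

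Concretely, I would argue as follows. First, establish the velocity estimate (\ref{u:elliptic}). Take $\mathbf{v} = \Pi_h\mathbf{u} - \tilde{\mathbf{u}}_h$ in the error equation (\ref{error:elliptic:a}); by (\ref{error:elliptic:b}) the first term on the right vanishes because $\nabla\cdot\mathbf{v}\in W_h$ and $(\nabla\cdot(\Pi_h\mathbf{u}-\tilde{\mathbf{u}}_h),w)=0$ for all $w\in W_h$. What remains on the right is
\begin{equation*}
-(K^{-1}\rho^{-1}(p)\,\mathbf{u},\mathbf{v}) + (K^{-1}\rho^{-1}(p)\,\Pi_h\mathbf{u},\mathbf{v})_Q = (K^{-1}\rho^{-1}(p)(\Pi_h\mathbf{u}-\mathbf{u}),\mathbf{v}) + \sigma(K^{-1}\rho^{-1}(p)\,\Pi_h\mathbf{u},\mathbf{v}),
\end{equation*}
using the definition of the quadrature error $\sigma$. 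The left-hand side is bounded below by $C\|\mathbf{v}\|^2$ via the coercivity (\ref{coercivity:quadrature}). The first right-hand term is controlled by $Ch\|\mathbf{u}\|_1\|\mathbf{v}\|$ using the standard approximation property $\|\Pi_h\mathbf{u}-\mathbf{u}\|\le Ch\|\mathbf{u}\|_1$ together with the boundedness of $K^{-1}\rho^{-1}(p)$. The quadrature-error term is estimated by the quadrature-error lemmas of \cite{whe:yot:06,ing:whe:yot:10}: on simplices, $h^2$-parallelograms and $h^2$-parallelepipeds one has $|\sigma(A\,\mathbf{q},\mathbf{v})| \le Ch\|A\|_{1,\infty,\mathcal{T}_h}\|\mathbf{q}\|_1\|\mathbf{v}\|$ for smooth enough $A$ and piecewise-polynomial $\mathbf{q},\mathbf{v}$; applied with $A = K^{-1}\rho^{-1}(p)$ and $\mathbf{q} = \Pi_h\mathbf{u}$ (whose $H^1$-norm is controlled by $\|\mathbf{u}\|_1$ via the stability of $\Pi_h$), this yields $Ch\|\mathbf{u}\|_1\|\mathbf{v}\|$. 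Dividing through by $\|\mathbf{v}\|$ gives $\|\Pi_h\mathbf{u}-\tilde{\mathbf{u}}_h\|\le Ch\|\mathbf{u}\|_1$, and the triangle inequality with $\|\mathbf{u}-\Pi_h\mathbf{u}\|\le Ch\|\mathbf{u}\|_1$ delivers (\ref{u:elliptic}).

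For the pressure estimate (\ref{p:elliptic}), I would use an inf--sup (Fortin) argument. Since $V_h\times W_h$ is a stable mixed pair, there is a constant $\beta>0$ such that for any $w_h\in W_h$ there exists $\mathbf{v}\in V_h$ with $\nabla\cdot\mathbf{v} = w_h$ and $\|\mathbf{v}\|\le C\|w_h\|$. Apply this with $w_h = \mathcal{P}_hp - \tilde{p}_h$ in (\ref{error:elliptic:a}); then $\|\mathcal{P}_hp-\tilde{p}_h\|^2$ equals a combination of $(K^{-1}\rho^{-1}(p)(\Pi_h\mathbf{u}-\tilde{\mathbf{u}}_h),\mathbf{v})_Q$, the approximation term, and the quadrature-error term, each bounded by $Ch(\|\mathbf{u}\|_1+\|p\|_1)\|\mathbf{v}\|\le Ch(\|\mathbf{u}\|_1+\|p\|_1)\|w_h\|$ --- here the already-established bound on $\|\Pi_h\mathbf{u}-\tilde{\mathbf{u}}_h\|$ and the continuity (\ref{continuity:quadrature}) are used. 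Dividing by $\|w_h\|$ and combining with $\|p-\mathcal{P}_hp\|\le Ch\|p\|_1$ via the triangle inequality yields (\ref{p:elliptic}). Finally, one checks that all these bounds hold uniformly in $t\in[0,T]$ because the constants depend on $p$ only through the $W^{1,\infty}_{\mathcal{T}_h}$-norm of $\rho^{-1}(p)$, which by (A3) is controlled by $\gamma_1,\gamma_2,\eta$ and $\|\nabla p\|$ --- or, more cleanly, one simply absorbs any such dependence into the generic constant as in the cited references, since the right-hand side already carries $\|\mathbf{u}\|_1$ and $\|p\|_1$.

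The main obstacle is the careful treatment of the quadrature error $\sigma(K^{-1}\rho^{-1}(p)\,\Pi_h\mathbf{u},\mathbf{v})$: one must verify that the composite coefficient $\mathcal{K}_E^{-1} = J_E^{-1}DF_E^T K^{-1}\rho^{-1}(p)\,DF_E$ on the reference element retains enough regularity ($W^{1,\infty}$ piecewise, with the $h^2$-parallelogram/parallelepiped condition ensuring the $\mathcal{O}(h^2)$ mixed derivatives of $F_E$ that make the trapezoidal rule first-order accurate rather than $\mathcal{O}(1)$) for the quadrature-error lemmas of \cite{whe:yot:06,ing:whe:yot:10} to apply. This is precisely the geometric restriction stated before the lemma, and once it is invoked the remaining work is the routine bookkeeping sketched above. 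The dependence on $\rho^{-1}(p)$ rather than a fixed tensor is dealt with by the chain rule, $\nabla(\rho^{-1}(p)) = -\rho^{-2}(p)\rho'(p)\nabla p$, which is bounded by (A3) wherever $p$ is smooth enough, so the coefficient truly lies in $W^{1,\infty}_{\mathcal{T}_h}$ as hypothesized.
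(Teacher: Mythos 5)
Your proposal is correct and follows essentially the same route as the paper: the paper's proof simply observes that the error equations (\ref{error:elliptic}) coincide with those of the incompressible MFMFE analysis with effective coefficient $K^{-1}\rho^{-1}(p)$ and invokes Theorems 3.4 and 4.1 of \cite{whe:yot:06} and Theorems 3.1 and 4.1 of \cite{ing:whe:yot:10} under (A1)--(A3), which is exactly the reduction you identify before unpacking the internals of those cited theorems (your only blemish is an immaterial sign on the quadrature-error term $\sigma$, since $(\cdot,\cdot)_Q=(\cdot,\cdot)-\sigma(\cdot,\cdot)$).
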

\begin{proof}
	The error equations in the form (\ref{error:elliptic}) are the counterpart of those considered in \cite{whe:yot:06,ing:whe:yot:10} for the convergence analysis of the incompressible case. Hence, the result follows from \cite[Theorems 3.4 and 4.1]{whe:yot:06} (for simplices and $h^2$-parallelograms) and \cite[Theorems 3.1 and 4.1]{ing:whe:yot:10} (for $h^2$-parallelepipeds), taking into consideration the assumptions (A1)-(A3).
\end{proof}

\subsection{Error estimates}

Next, we will estimate the distance between the elliptic projection $(\tilde{\mathbf{u}}_h,\tilde{p}_h)$ and the semidiscrete solution $(\mathbf{u}_h,p_h)$. The subsequent combination of these bounds with the corresponding results from the previous section yields the convergence for both the velocity and the pressure variables.

Before stating the main result of this section, we will provide some auxiliary bounds that will be used below. In particular, from (\ref{u:elliptic}) and the inverse inequality, it follows \cite{kim:par:tho:whe:07,cho:li:92}
\begin{equation}\label{utildeh:bound}
\|\tilde{\mathbf{u}}_h\|_{L^{\infty}(J;L^{\infty}(\Omega)^d)}\leq C_1.
\end{equation}
Accordingly, using the bound for the time derivative $(p-\tilde{p}_h)_t$ based on (\ref{p:elliptic}) and the inverse inequality, we obtain
\begin{equation}\label{ptildeht:bound}
\|\tilde{p}_{h,t}\|_{L^{\infty}(J;L^{\infty}(\Omega))}\leq C_2.
\end{equation}
Finally, we will require that $p_t\in L^{\infty}(J;L^{\infty}(\Omega))$, and denote $\|p_t\|_{\infty}:=\|p_t\|_{L^{\infty}(J;L^{\infty}(\Omega))}$. Note that, in our setting, this is a reasonable assumption (see, e.g., \cite[Section 3.4]{whe:73}).

Now, we are in position to derive a priori error estimates for both the velocity and pressure variables.

\begin{theorem}
	If $K^{-1}$ and $\rho^{-1}\in W^{1,\infty}_{\mathcal{T}_h}$, then the velocity $\mathbf{u}_h$ and the pressure $p_h$ of the MFMFE method (\ref{mfmfe:method}), with the quadrature rule (\ref{global:quadrature})-(\ref{symmetric:quadrature:physical}), satisfy
	\begin{align}
	&\|\mathbf{u}-\mathbf{u}_h\|_{L^{2}(J;L^2(\Omega)^d)}\leq Ch\left(\|\mathbf{u}\|_{L^{2}(J;H^1(\Omega)^d)}+\|p\|_{L^{2}(J;H^1(\Omega))}
	+\|p_t\|_{L^{2}(J;H^1(\Omega))}\right),\label{theorem:u}\\[1.5ex]
	&\|p-p_h\|_{L^{\infty}(J;L^2(\Omega))}\leq Ch\left(\|\mathbf{u}\|_{L^{\infty}(J;H^1(\Omega)^d)}
	+\|\mathbf{u}\|_{L^{2}(J;H^1(\Omega)^d)}
	+\|p\|_{L^{\infty}(J;H^1(\Omega))}\right.\nonumber\\%[0.5ex]
	&\hspace*{7.7cm}\left.+\|p\|_{L^{2}(J;H^1(\Omega))}
	+\|p_t\|_{L^{2}(J;H^1(\Omega))}\right)\label{theorem:p}
	\end{align}
	on simplices, $h^2$-parallelograms and $h^2$-parallelepipeds, where $C$ is a positive constant which depends on $\alpha_1$, $\alpha_2$, $\kappa_{\ast}$, $\kappa^{\ast}$, $\gamma_1$, $\gamma_2$, $\eta$, $C_1$, $C_2$ and $\|p_t\|_{\infty}$, but is defined to be independent of $h$.
\end{theorem}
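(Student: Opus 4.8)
The plan is to run the classical energy/Gronwall argument for parabolic mixed methods relative to the elliptic projection of Lemma~\ref{lemma:velocity:pressure:elliptic:projection}. Writing $\mathbf{u}-\mathbf{u}_h=(\mathbf{u}-\tilde{\mathbf{u}}_h)+(\tilde{\mathbf{u}}_h-\mathbf{u}_h)$ and $p-p_h=(p-\tilde{p}_h)+(\tilde{p}_h-p_h)$, the first summands are already $\mathcal{O}(h)$ by (\ref{u:elliptic})--(\ref{p:elliptic}), so it remains to bound $\mathbf{e}:=\mathbf{u}_h-\tilde{\mathbf{u}}_h$ and $e:=p_h-\tilde{p}_h$. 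Subtracting (\ref{elliptic:projection:a})--(\ref{elliptic:projection:b}) from (\ref{mfmfe:method:a})--(\ref{mfmfe:method:b})---both sides employ the quadrature form $(\cdot,\cdot)_Q$, so no quadrature residual is generated in this particular subtraction---and adding and subtracting $(K^{-1}\rho^{-1}(p)\,\mathbf{u}_h,\mathbf{v})_Q$, one obtains the error system
\begin{align*}
&(K^{-1}\rho^{-1}(p)\,\mathbf{e},\mathbf{v})_Q=(e,\nabla\cdot\mathbf{v})-(K^{-1}(\rho^{-1}(p_h)-\rho^{-1}(p))\,\mathbf{u}_h,\mathbf{v})_Q+((\rho(p_h)-\rho(p))\,\mathbf{g},\mathbf{v}),\quad\mathbf{v}\in V_h,\\
&(\nabla\cdot\mathbf{e},w)=-(\phi\,\partial_t(\rho(p_h)-\rho(p)),w),\quad w\in W_h,
\end{align*}
supplemented with $e(0)=0$ by (\ref{mfmfe:method:c})--(\ref{elliptic:projection:c}).

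Taking $\mathbf{v}=\mathbf{e}$, $w=e$ makes the divergence pairings cancel, leaving
\begin{equation*}
(K^{-1}\rho^{-1}(p)\,\mathbf{e},\mathbf{e})_Q+(\phi\,\partial_t(\rho(p_h)-\rho(p)),e)=((\rho(p_h)-\rho(p))\,\mathbf{g},\mathbf{e})-(K^{-1}(\rho^{-1}(p_h)-\rho^{-1}(p))\,\mathbf{u}_h,\mathbf{e})_Q.
\end{equation*}
For the accumulation term I split $\rho(p_h)-\rho(p)=(\rho(p_h)-\rho(\tilde{p}_h))+(\rho(\tilde{p}_h)-\rho(p))=:a\,e+\psi$, where, by (A3), $a:=\int_0^1\rho'(\tilde{p}_h+s\,e)\,ds$ satisfies $\gamma_1\le a\le\gamma_2$, while $\|\psi\|$ and $\|\psi_t\|$ are $\mathcal{O}(h)$ (by Lemma~\ref{lemma:velocity:pressure:elliptic:projection} applied to (\ref{elliptic:projection}) and to its time derivative, together with $\|p_t\|_\infty$). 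Integrating the identity over $(0,t)$, using $e(0)=0$, and integrating $(\phi\,\partial_t(a\,e),e)$ by parts in time, this term contributes $\tfrac12(\phi\,a(t)\,e(t),e(t))+\tfrac12\int_0^t(\phi\,a_\tau\,e,e)\,d\tau\ge\tfrac{\alpha_1\gamma_1}{2}\|e(t)\|^2-\tfrac12\int_0^t|(\phi\,a_\tau\,e,e)|\,d\tau$, while (\ref{coercivity:quadrature}) bounds $\int_0^t(K^{-1}\rho^{-1}(p)\,\mathbf{e},\mathbf{e})_Q\,d\tau$ from below by $C\int_0^t\|\mathbf{e}\|^2\,d\tau$. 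The remaining right-hand side terms are handled by the mean value theorem and (A3): $|\rho^{-1}(p_h)-\rho^{-1}(p)|+|\rho(p_h)-\rho(p)|\le C(|e|+|\tilde{p}_h-p|)$, so Cauchy--Schwarz and Young's inequality turn them into $\mathcal{O}(\|e\|^2)+\mathcal{O}(\|\mathbf{e}\|^2)+\mathcal{O}(h^2)$ contributions, provided the factor $\mathbf{u}_h$ is bounded in $L^\infty$; for the latter I write $\mathbf{u}_h=\tilde{\mathbf{u}}_h+\mathbf{e}$ and invoke the a priori bound (\ref{utildeh:bound}) and an inverse inequality on the finite-dimensional space $V_h$.

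The main obstacle is the remainder $\int_0^t|(\phi\,a_\tau\,e,e)|\,d\tau$. Since $a_\tau=\int_0^1\rho''(\tilde{p}_h+s\,e)(\tilde{p}_{h,\tau}+s\,e_\tau)\,ds$, it carries the time derivatives of the discrete solution; it is controlled using $|\rho''|\le\eta$, the bound (\ref{ptildeht:bound}) on $\tilde{p}_{h,t}$, and an $L^\infty$ control of $e_t$ (equivalently $p_{h,t}$, extracted from (\ref{mfmfe:method:b}) and an inverse inequality), which is exactly the place where the dependence of $C$ on $C_1$, $C_2$, $\|p_t\|_\infty$ and the constants of (A1)--(A3) arises and where one must verify that no negative power of $h$ survives; a continuation (bootstrap) argument in $t$---assuming the bound to hold up to $t^\ast$, improving it, and concluding $t^\ast=T$---is the natural way to close it. Once this term is absorbed, Gronwall's lemma yields $\|e(t)\|^2\le Ch^2(\cdots)$ uniformly on $[0,T]$, hence the pressure estimate (\ref{theorem:p}) after adding $\|p-\tilde{p}_h\|$ from Lemma~\ref{lemma:velocity:pressure:elliptic:projection}. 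Substituting the resulting bound on $\|e\|_{L^2(J;L^2)}$ into the time-integrated energy identity and using (\ref{coercivity:quadrature}) once more gives $\|\mathbf{e}\|_{L^2(J;L^2)}\le Ch(\cdots)$, and the triangle inequality with (\ref{u:elliptic}) delivers the velocity estimate (\ref{theorem:u}).
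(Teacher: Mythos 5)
Your overall architecture---elliptic projection, triangle inequality, testing the error equations with the pair $(\tilde{\mathbf{u}}_h-\mathbf{u}_h,\tilde p_h-p_h)$, coercivity and continuity of the quadrature form, mean-value theorem plus Young, and Gronwall---coincides with the paper's, and your treatment of the Darcy-law terms, of the gravity term, and of the $\mathcal{O}(h)$ consistency terms supplied by Lemma~\ref{lemma:velocity:pressure:elliptic:projection} (including the observation that no quadrature residual arises in the subtraction) is correct. The genuine gap is in the accumulation term. You write $\rho(p_h)-\rho(\tilde p_h)=a\,e$ with $a=\int_0^1\rho'(\tilde p_h+s\,e)\,ds$ and integrate $(\phi\,\partial_t(a\,e),e)$ by parts in time, which leaves the remainder $\tfrac12\int_0^t(\phi\,a_\tau\,e,e)\,d\tau$ in which $a_\tau$ contains $e_\tau$, i.e.\ $p_{h,t}$. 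To absorb it you would need an $L^\infty$ (or at least uniform) bound on $p_{h,t}$, which you propose to extract from (\ref{mfmfe:method:b}) via an inverse inequality and a bootstrap; but the inverse inequality costs $h^{-d/2}$, an $L^2$ bound on $p_{h,t}$ from (\ref{mfmfe:method:b}) already requires control of $\|\nabla\cdot\mathbf{u}_h\|$ that you have not established, and the continuation argument is only gestured at. As written, the critical term is not bounded---you yourself flag that ``one must verify that no negative power of $h$ survives'' without doing so.

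The paper sidesteps this difficulty entirely by choosing a different antiderivative. Following \cite{dou:pae:gio:93,par:05,whe:73}, it writes
\[
(\phi\,(\rho(\tilde p_h)_t-\rho(p_h)_t),\tilde p_h-p_h)=\frac{d}{dt}\int_{\Omega}\phi\int_{0}^{\tilde p_h-p_h}\rho'(\tilde p_h+\xi)\,\xi\,d\xi\,d\mathbf{x}+R,
\]
where the remainder $R$ consists of two integrals whose only time derivative is $\tilde p_{h,t}$---the elliptic projection, already bounded in $L^\infty$ by (\ref{ptildeht:bound})---so that $|R|\le C\|\tilde p_h-p_h\|^2$ with $C$ depending on $\alpha_2$, $\eta$ and $C_2$, and no derivative of the discrete solution $p_h$ ever appears outside the exact time derivative. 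That exact-derivative part is bounded below by $C\|\tilde p_h-p_h\|^2$ via (A1) and (A3) (inequality (\ref{integral:bound})), after which Gronwall closes the argument with no bootstrap. You should replace your $a\,e$ splitting by this identity, or else actually supply the missing a priori bound on $p_{h,t}$, which is considerably harder and is not needed.
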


\begin{proof}
	For all $t\in[0,T]$, let us split the velocity and pressure errors in the $L^2(\Omega)$-norm via the triangle inequality, i.e.,
	\begin{align}
	&\|(\mathbf{u}-\mathbf{u}_h)(t)\|\leq
	\|(\mathbf{u}-\tilde{\mathbf{u}}_h)(t)\|
	+\|(\tilde{\mathbf{u}}_h-\mathbf{u}_h)(t)\|,\label{triangle:u}\\[0.5ex]
	&\|(p-p_h)(t)\|\leq
	\|(p-\tilde{p}_h)(t)\|
	+\|(\tilde{p}_h-p_h)(t)\|.\label{triangle:p}
	\end{align}
	Subtracting (\ref{mfmfe:method}) from (\ref{elliptic:projection}), we get the error equations
	\begin{subequations}\label{error:semidiscrete}
		\renewcommand{\theequation}{\theparentequation\alph{equation}}
		\begin{align}
		&(K^{-1}\rho^{-1}(p)\,\tilde{\mathbf{u}}_h
		-K^{-1}\rho^{-1}(p_h)\,\mathbf{u}_h,\mathbf{v})_Q=(\tilde{p}_h-p_h,\nabla\cdot\mathbf{v})\nonumber\\
		&\hspace*{7cm}+((\rho(p)-\rho(p_h))\,\mathbf{g},\mathbf{v}),&&\mathbf{v}\in V_h,\label{error:semidiscrete:a}\\[0.5ex]
		&(\phi\,(\rho(p)_t-\rho(p_h)_t),w)+(\nabla\cdot(\tilde{\mathbf{u}}_h-\mathbf{u}_h),w)=0,&&w\in W_h,\label{error:semidiscrete:b}\\[0.5ex]
		&(\tilde{p}_h(0)-p_h(0),w)=0,&&w\in W_h.\label{error:semidiscrete:c}
		\end{align}
	\end{subequations}
	If we take $\mathbf{v}=\tilde{\mathbf{u}}_h-\mathbf{u}_h$ and $w=\tilde{p}_h-p_h$ in (\ref{error:semidiscrete:a})-(\ref{error:semidiscrete:b}), sum the resulting equations and rearrange some terms, we obtain
	\begin{align}\label{sum:up}
	&(K^{-1}\rho^{-1}(p_h)(\tilde{\mathbf{u}}_h-\mathbf{u}_h),\tilde{\mathbf{u}}_h-\mathbf{u}_h)_Q
	+(\phi\,(\rho(\tilde{p}_h)_t-\rho(p_h)_t),\tilde{p}_h-p_h)\nonumber\\[0.5ex]
	&\hspace*{1.2cm}
	=-((K^{-1}\rho^{-1}(p)-K^{-1}\rho^{-1}(p_h))\,\tilde{\mathbf{u}}_h,\tilde{\mathbf{u}}_h-\mathbf{u}_h)_Q
	+((\rho(p)-\rho(p_h))\,\mathbf{g},\tilde{\mathbf{u}}_h-\mathbf{u}_h)\nonumber\\[0.5ex]
	&\hspace*{9.5cm}
	+(\phi\,(\rho(\tilde{p}_h)_t-\rho(p)_t),\tilde{p}_h-p_h).
	\end{align}
	Based on the coercivity (\ref{coercivity:quadrature}) of the discrete bilinear form $(K^{-1}\rho^{-1}(p)\,\mathbf{q},\mathbf{v})_Q$, the first term on the left in (\ref{sum:up}) is bounded by
	\begin{equation}\label{first:term}
	(K^{-1}\rho^{-1}(p_h)(\tilde{\mathbf{u}}_h-\mathbf{u}_h),\tilde{\mathbf{u}}_h-\mathbf{u}_h)_Q\geq C\,\|\tilde{\mathbf{u}}_h-\mathbf{u}_h\|^2,
	\end{equation}
	where $C$ depends on $\kappa^{\ast}$ and $\gamma_2$. The continuity (\ref{continuity:quadrature}) of such a bilinear form permits to bound the first term on the right in (\ref{sum:up}) as follows
	\begin{align}
	&|(K^{-1}\rho^{-1}(p)-K^{-1}\rho^{-1}(p_h))\,\tilde{\mathbf{u}}_h,\tilde{\mathbf{u}}_h-\mathbf{u}_h)_Q|\leq
	C\,(\|p-p_h\|^2+\|\tilde{\mathbf{u}}_h-\mathbf{u}_h\|^2)
	\end{align}
	where we use (A2), (A3), (\ref{utildeh:bound}), the mean-value theorem and Young's inequality, $ab\leq\frac{1}{2}\left(\epsilon a^2+\frac{1}{\epsilon}b^2\right)$ for all $a,b\geq0$, with $\epsilon=1$. In this case, the constant $C$ depends on $\kappa_{\ast}$, $\gamma_1$ and $C_1$. In the same way, the second term on the right in (\ref{sum:up}) is bounded by
	\begin{equation}
	|((\rho(p)-\rho(p_h))\,\mathbf{g},\tilde{\mathbf{u}}_h-\mathbf{u}_h)|\leq
	C\,(\|p-p_h\|^2+\|\tilde{\mathbf{u}}_h-\mathbf{u}_h\|^2),
	\end{equation}
	using the Cauchy--Schwarz inequality, the mean-value theorem and Young's inequality, where $C$ depends on $\gamma_2$. As for the last term on the right, further applying the chain rule, (A1) and (A3), we get
	\begin{align}
	&(\phi\,(\rho(\tilde{p}_h)_t-\rho(p)_t),\tilde{p}_h-p_h)
	=(\phi\,(\rho'(\tilde{p}_h)-\rho'(p))\,p_t,\tilde{p}_h-p_h)\nonumber\\[0.5ex]
	&\hspace*{1cm}
	+(\phi\,\rho'(\tilde{p}_h)(\tilde{p}_{h,t}-p_t),\tilde{p}_h-p_h) \leq C\,(\|p-\tilde{p}_h\|^2+\|\tilde{p}_h-p_h\|^2+\|(p-\tilde{p}_h)_t\|^2),
	\end{align}
	where $C$ depends on $\alpha_2$, $\eta$ and $\|p_t\|_{\infty}$. It remains to analyze the second term on the left in (\ref{sum:up}). Following \cite{dou:pae:gio:93,par:05,whe:73}, we have
	\begin{align*}
	&(\phi\,(\rho(\tilde{p}_h)_t-\rho(p_h)_t),\tilde{p}_h-p_h)
	=\dfrac{d}{dt}\int_{\Omega}\phi\int_{0}^{\tilde{p}_h-p_h}\rho'(\tilde{p}_h+\xi)\,\xi\,d\xi\,d\mathbf{x}
	\nonumber\\[0.5ex]
	&\hspace*{1cm}
	+\int_{\Omega}\phi\,(\rho'(p_h)-\rho'(\tilde{p}_h))\,\tilde{p}_{h,t}\,(\tilde{p}_h-p_h)\,d\mathbf{x}
	-\int_{\Omega}\phi\int_{0}^{\tilde{p}_h-p_h}\rho''(\tilde{p}_h+\xi)\,\tilde{p}_{h,t}\,\xi\,d\xi\,d\mathbf{x}.
	\end{align*}
	Using (A1), (A3) and (\ref{ptildeht:bound}), we get the bounds
	\begin{align*}
	&\left|\int_{\Omega}\phi\int_{0}^{\tilde{p}_h-p_h}\rho''(\tilde{p}_h+\xi)\,\tilde{p}_{h,t}\,\xi\,d\xi\,d\mathbf{x}\,\right|\leq C\,\|\tilde{p}_h-p_h\|^2,\\[0.5ex]
	&\left|\int_{\Omega}\phi\,(\rho'(p_h)-\rho'(\tilde{p}_h))\,\tilde{p}_{h,t}\,(\tilde{p}_h-p_h)\,d\mathbf{x}\,\right|\leq C\,\|\tilde{p}_h-p_h\|^2,
	\end{align*}
	where $C$ depends on $\alpha_2$, $\eta$ and $C_2$. Hence,
	\begin{align}\label{last:term}
	&(\phi\,(\rho(\tilde{p}_h)_t-\rho(p_h)_t),\tilde{p}_h-p_h)
	\geq\dfrac{d}{dt}\int_{\Omega}\phi\int_{0}^{\tilde{p}_h-p_h}\rho'(\tilde{p}_h+\xi)\,\xi\,d\xi\,d\mathbf{x}-C\,\|\tilde{p}_h-p_h\|^2.
	\end{align}
	In addition, due to (A1) and (A3), it holds
	\begin{align}\label{integral:bound}
	\int_{\Omega}\phi\int_{0}^{\tilde{p}_h-p_h}\rho'(\tilde{p}_h+\xi)\,\xi\,d\xi\,d\mathbf{x}
	\geq C\,\|\tilde{p}_h-p_h\|^2.
	\end{align}
	In this case, $C$ depends on $\alpha_1$ and $\gamma_1$.	Inserting (\ref{first:term})-(\ref{last:term}) into (\ref{sum:up}), we obtain
	\begin{align*}
	&\dfrac{d}{dt}\int_{\Omega}\phi\int_{0}^{\tilde{p}_h-p_h}\rho'(\tilde{p}_h+\xi)\,\xi\,d\xi\,d\mathbf{x}
	+C\,\|\tilde{\mathbf{u}}_h-\mathbf{u}_h\|^2\nonumber\\[0.5ex]
	&\hspace*{5cm}
	\leq C\,(\|p-\tilde{p}_h\|^2+\|\tilde{p}_h-p_h\|^2+\|(p-\tilde{p}_h)_t\|^2).
	\end{align*}
	Integrating with respect to $t$, using (\ref{integral:bound}) and Gronwall's lemma, yields
	\begin{align*}
	&\|(\tilde{p}_h-p_h)(t)\|^2+\int_0^t\|(\tilde{\mathbf{u}}_h-\mathbf{u}_h)(\tau)\|^2\,d\tau
	\leq C\int_0^t(\|(p-\tilde{p}_h)(\tau)\|^2+\|(p-\tilde{p}_h)_t(\tau)\|^2)\,d\tau,
	\end{align*}
	for all $t\in[0,T]$. Note that the initial condition implies $(\tilde{p}_h-p_h)(0)=0$ and the right-hand side is bounded by (\ref{p:elliptic}). If we consider $t=T$ in the previous inequality, together with (\ref{u:elliptic}) and (\ref{triangle:u}), we get the bound (\ref{theorem:u}). In turn, if we take the supremum over all $t$, and consider the inequalities (\ref{p:elliptic}) and (\ref{triangle:p}), we obtain the bound (\ref{theorem:p}).
\end{proof}

\section{The fully discrete formulation}\label{section:fully:discrete}

In order to derive a fully discrete formulation of the MFMFE method (\ref{mfmfe:method}), we consider a time integration using the backward Euler method. For simplicity, given $N\in\mathbb{N}$, we consider an equidistant time grid $0=t_0<t_1<\ldots<t_{N}=T$, where $t_n=n\tau$ and $\tau=T/N$. The fully discrete approximation to \eqref{ibvp} is given by: \textit{Find $(\mathbf{{u}}_h^{n+1},p_h^{n+1})\in V_h\times W_h$ such that, for $n=0,1,\ldots,N-1$,}
\begin{subequations}\label{mfmfe}
	\begin{align}
	&\left(K^{-1}\rho^{-1}(p_{h}^{n+1})\mathbf{u}_h^{n+1},\mathbf{v}\right)_Q=\left(p_h^{n+1},\nabla\cdot\mathbf{v}\right)+\left(\rho(p_{h}^{n+1})\,\mathbf{g},\mathbf{v}\right),&&\mathbf{v}\in V_h,\label{mfmfe:b}\\[0.5ex]
	&\left(\phi\,\rho(p_{h}^{n+1}),w\right)+\left(\tau\,\nabla\cdot\mathbf{u}_h^{n+1},w\right)=\left(\phi\,\rho(p_{h}^{n})+\tau f^{n+1},w\right),&&{w}\in W_h,\label{mfmfe:a}\\[0.5ex]
	&\,\,p_h^0=p_h(0),\label{mfmfe:c}
	\end{align}
\end{subequations}
where $f^{n+1}=f(\cdot,t_{n+1})$.

Let $N_{\ell}$, $N_e$ and $N_v$ denote the number of faces (edges), elements and vertices in $\mathcal{T}_h$. We denote $L=nN_{\ell}$, where $n$ is the number of vertices per face (edge). That is, $L$ and $N_e$ are the number of degrees of freedom of the totally discrete velocity and pressure functions, respectively. In this context, let $\{\mathbf{v}_i\}_{i=1}^{L}$ and $\{w_i\}_{i=1}^{N_e}$ be the finite element basis functions for $V_h$ and $W_h$, respectively.
Therefore, the unknowns of problem (\ref{mfmfe}) can be expressed as
$$\mathbf{u}_h^{n+1}=\sum_{i=1}^{L}U_{h,i}^{n+1}\mathbf{v}_i, \qquad p_h^{n+1}=\sum_{i=1}^{N_e}P_{h,i}^{n+1}w_i.$$

If we group the velocity unknowns per vertices, we can define a vector of unknown velocity coefficients
$$U_{h}^{n+1}=[\tilde{U}_{h,1}^{n+1},\tilde{U}_{h,2}^{n+1},\ldots,\tilde{U}_{h,N_v}^{n+1}]^T\in\mathbb{R}^{L},$$ 
where $\tilde{U}_{h,1}^{n+1}\in\mathbb{R}^{\ell_i}$, $\ell_i$ being the number of faces (edges) that share the $i$-th vertex point. The component of $U_{h}^{n+1}$ associated to the face (edge) $e_j$ at vertex $\mathbf{r}_i$ is given by the volumetric flux $(\mathbf{u}_h^{n+1}\cdot\mathbf{n}_{e_j})(\mathbf{r}_i)|e_j|$, where $|e_j|$ denotes the area (length) of $e_j$, for $j=1,2,\ldots,\ell_i$. 
On the other hand, the pressure unknowns can also be grouped into the vector
$$P_h^{n+1}=[P_{h,1}^{n+1}, P_{h,2}^{n+1},\ldots, P_{h,N_e}^{n+1}]^T\in\mathbb{R}^{N_e},$$
where $P_{h,i}^{n+1}=p_h^{n+1}(\mathbf{x}_{c,i})$, $\mathbf{x}_{c,i}$ being the coordinate vector of the center of mass of the $i$-th element.

Considering the lighter notation $P_i:=P_{h,i}^{n+1}$ and $U_j:=U_{h,j}^{n+1}$, for $i=1,2,\ldots,N_e$ and $j=1,2,\ldots,L$, the nonlinear problem (\ref{mfmfe}) may be rewritten as the following system of nonlinear  equations
\begin{subequations}
	\begin{align*}
	&F_j(U_h^{n+1},P_h^{n+1})=\left(\rho_0^{-1}e^{-c_f\sum_{i=1}^{N_e}P_i\,w_i}K^{-1}\sum_{i=1}^{L}U_i\mathbf{v}_i,\mathbf{v}_j\right)_Q-\left(\sum_{i=1}^{N_e}P_i\,w_i,\nabla\cdot\mathbf{v}_j\right)\nonumber\\
	&\hspace*{5.5cm}-\left(\rho_0\,e^{c_f\sum_{i=1}^{N_e}P_i\,w_i}\,\mathbf{g},\mathbf{v}_j\right)=0, \qquad j=1,2,\ldots,L,\\[1ex]
	&G_j(U_h^{n+1},P_h^{n+1})=-\left(\phi\,\rho_0\,e^{c_f\sum_{i=1}^{N_e}P_i\,w_i},w_j\right)-\left(\tau\nabla\cdot\sum_{i=1}^{2N_{\ell}}U_i\mathbf{v}_i,w_j\right)\nonumber\\[1ex]
	&\hspace*{5.5cm}+(\phi\,\rho(p_{h}^{n})+\tau f^{n+1},w_j)=0,\qquad j=1,2,\ldots,N_e,
	\end{align*}
\end{subequations}
where $\rho_0=\rho_{\mathrm{ref}}\,e^{-c_f\,p_{\mathrm{ref}}}$. Thus, at time level $n+1$, the fully discrete problem (\ref{mfmfe}) is equivalent to the nonlinear system of $L+N_e$ residual equations
\begin{equation}
\label{non:lin:sys}
\begin{array}{l}
F(U_h^{n+1},P_h^{n+1})=0,\\[0.8ex]
G(U_h^{n+1},P_h^{n+1})=0,
\end{array}
\end{equation}
for the unknown vector $[U_h^{n+1},P_h^{n+1}]^T$ of the same size.

\section{Solution of the residual equations}\label{section:residual:eqs}

Classical iterative methods for solving nonlinear systems of equations include the Picard iteration and Newton-type methods \cite{kel:95}. In the framework of variably saturated flow problems, this type of linearization schemes was successfully applied to the solution of the resulting systems from various discretizations of Richards' equation \cite{cel:bou:zar:90,pan:put:94,ber:put:99}. A new linearization technique, the so-called $L$--scheme, was subsequently proposed for a class of nonlinear and degenerate parabolic problems, including Richards' equation (see \cite{pop:rad:kna:04} and references therein). This fixed-point linearization scheme was proven to be a valuable alternative to Picard or Newton methods for degenerate problems \cite{lis:rad:16,rad:nor:pop:kum:15}. More recently, suitable combinations of the preceding techniques, such as the Picard/Newton method or the $L$--scheme/Newton method, have also been proposed and analyzed in the literature (see \cite{lis:rad:16} and references therein).

Despite the relatively high CPU cost per-iteration of Newton method, its second-order convergence when the initial guess is close enough to the solution makes it a powerful tool for solving nonlinear systems arising from the discretization of non-degenerate parabolic equations. In this setting, we propose an efficient solution strategy for the nonlinear system (\ref{non:lin:sys}), based on a Newton-type method. For that purpose, we compute the partial derivatives of the residual equations with respect to each unknown, and introduce the following notations
\begin{subequations}\label{res:eq}
	\begin{align}
	&A_{ji}^{n+1}=\frac{\partial F_j}{\partial U_i}(U_h^{n+1},P_h^{n+1})=(\rho^{-1}(p_h^{n+1})K^{-1}\mathbf{v}_i,\mathbf{v}_j)_Q,\hspace*{1.2cm} i,\,j=1,2,\ldots,L,\label{res:eq:a}\\[1ex]
	&\tilde{B}_{ji}^{n+1}=\frac{\partial F_j}{\partial P_i}(U_h^{n+1},P_h^{n+1}t)=-(c_fw_i\,\rho^{-1}(p_h^{n+1})K^{-1}\mathbf{u}_h^{n+1},\mathbf{v}_j)_Q-(w_i,\nabla\cdot\mathbf{v}_j)\nonumber\\
	&\hspace*{3cm}-(c_fw_i\rho(p_h^{n+1})\,\mathbf{g},\mathbf{v}_j),\qquad i=1,2,\ldots,N_e,\ j=1,2,\ldots,L,\label{res:eq:b}	\\[1ex]
	&C_{ji}=\frac{\partial G_j}{\partial U_i}(U_h^{n+1},P_h^{n+1})=-(\tau \nabla\cdot\mathbf{v}_i,w_j),\quad  i=1,2,\ldots,L,\ j=1,2,\ldots,N_e,\label{res:eq:c}\\[1ex]
	&D_{ji}^{n+1}=\frac{\partial G_j}{\partial P_i}(U_h^{n+1},P_h^{n+1})=-(\phi \,c_fw_i\rho(p_h^{n+1}),w_j),\hspace*{1.3cm} i,\,j=1,2,\ldots,N_e.\label{res:eq:d}
	\end{align}
\end{subequations}
Following \cite{gan:jun:pen:whe:yot:14}, since we are dealing with slightly compressible single-phase flow (i.e., the compressibility constant $c_f$ is small), we drop the two terms containing $c_f$ in (\ref{res:eq:b}). In this way, we obtain a nearly symmetric Jacobian matrix 
\[\begin{bmatrix}
A^{n+1} & B \\
C & D^{n+1}
\end{bmatrix},
\]
where $A^{n+1}$ is a symmetric and positive definite block-diagonal matrix, as we will see below, $D^{n+1}$ is a diagonal matrix and $C=\tau\,B^T$. Hence, given $[U_{h}^{n+1,k},P_{h}^{n+1,k}]^T$, the linear system to solve at the $(k+1)$-th inexact Newton iteration is
\begin{equation}
\label{newton:it}
\begin{bmatrix}
A & B \\
\tau B^T & D
\end{bmatrix}
\begin{bmatrix}
\Delta U_h^{n+1,k+1}\\
\Delta P_h^{n+1,k+1}
\end{bmatrix}
=-\begin{bmatrix}
F(U_{h}^{n+1,k},P_{h}^{n+1,k})\\
G(U_{h}^{n+1,k},P_{h}^{n+1,k})
\end{bmatrix},
\end{equation}
where the notations $A=A^{n+1,k}$, $D=D^{n+1,k}$ have been adopted. Note that $c_f$ is not assumed to be zero in the computation of the residual vector (right-hand side). Once the system (\ref{newton:it}) has been solved, $[U_{h}^{n+1,k+1},P_{h}^{n+1,k+1}]^T$ is computed as usual
\begin{equation*}
\begin{bmatrix}
U_h^{n+1,k+1}\\
P_h^{n+1,k+1}
\end{bmatrix}
=\begin{bmatrix}
U_h^{n+1,k}\\
P_h^{n+1,k}
\end{bmatrix}
+\begin{bmatrix}
\Delta U_h^{n+1,k+1}\\
\Delta P_h^{n+1,k+1}
\end{bmatrix}.
\end{equation*}
According to the theory on inexact Newton methods \cite{kel:95}, when convergence is achieved, the solution to this nonlinear system will coincide with that provided by the full Newton method.

Finally, we will describe how to eliminate the velocity unknowns from each Newton iteration (\ref{newton:it}). It is significant to note that the use of the quadrature rule defined by (\ref{global:quadrature})-(\ref{symmetric:quadrature:physical}) permits to decouple the velocity degrees of freedom associated to a vertex from the rest of them. As a consequence, the matrix $A$ has a block-diagonal structure of the form $A=\mathrm{diag}(A_1,A_2,\ldots,A_{N_v})$, where each block $A_i\in\mathbb{R}^{\ell_i\times\ell_i}$ is related to the velocity unknowns associated to the $i$-th mesh vertex and can be easily inverted. In this framework, we can write
$$
\Delta U_{h}^{n+1,k+1}=-A^{-1}(B\Delta P_{h}^{n+1,k+1}+F(U_{h}^{n+1,k},P_{h}^{n+1,k})).
$$ 
The previous formula permits us to express the velocity unknowns associated to each corner in terms of the pressure unknowns located at the centers of the elements that share that corner. Thus, each Newton-type iteration (\ref{newton:it}) can be reduced to the solution of a cell-centered linear system for the pressure increments
$$
(\tau B^{T}A^{-1}B-D)\Delta P_{h}^{n+1,k+1}=G(U_{h}^{n+1,k},P_{h}^{n+1,k})
-\tau B^TA^{-1}F(U_{h}^{n+1,k},P_{h}^{n+1,k}).
$$
The previous system matrix is symmetric and positive definite and represents a 27-point or 9-point stencil on logically cubic or rectangular grids, respectively.

\section{Numerical experiments}\label{section:numer:examples}

This section contains some numerical experiments that show the performance of the  method when applied to slightly compressible flow problems in different scenarios. In the sequel, tensor $K$ is considered to be defined as $K=\mu^{-1}\hat{K}$, where $\mu$ is the fluid viscosity and $\hat{K}$ is the rock permeability tensor.

\subsection{Smooth solution test}

Let us consider a no gravity two-dimensional initial-boundary value problem of type (\ref{ibvp}), where $\Omega=(0,1)^2$, $\Gamma_D=\partial\Omega$, $T=2$ and tensor $\hat{K}$ is given by
$$
\hat{K}:=\hat{K}(x,y)=\begin{bmatrix}
4+(x+2)^2+y^2 & 1+x\,y \\
1+x\,y & 2
\end{bmatrix}.
$$
Data functions $f(\mathbf{x},t)$ and $p_0(\mathbf{x})$ are defined in such a way that the exact solution is $p(\mathbf{x},t)=t\sin(3\pi x)^2\sin(3\pi y)^2$. On the other hand, the parameter values are chosen to be  $c_f=4\times10^{-5}$, $\mu=2$, $\phi=0$.$2$, $\rho_{\mathrm{ref}}=1$ and $p_{\mathrm{ref}}=0$.

The spatial domain is discretized by means of three types of quadrilateral meshes consisting of $N\times N$ elements. The first one is a family of smooth meshes composed of $h^2$-parallelograms, where $h=1/N$. It is defined as the following $C^{\infty}$-map of successively refined uniform meshes on the unit square
$$
\begin{array}{l}x=\hat{x}+\frac{3}{50}\sin(2\pi\hat{x})\sin(2\pi\hat{y}),\\[0.5ex]
y=\hat{y}-\frac{1}{20}\sin(2\pi\hat{x})\sin(2\pi\hat{y}).\end{array}
$$
An illustration of this type of meshes is given in Figure \ref{fig:meshes}(a). Next, we consider a set of Kershaw-type meshes \cite{Ker:81} which contain certain highly skewed zones, as shown in Figure \ref{fig:meshes}(b). 
Finally, we consider a family of randomly $h$-perturbed meshes consisting of highly distorted quadrilaterals. Each of these meshes is
generated by perturbing the vertices of a uniform mesh by a distance
of size $O(h)$ in a random direction, see Figure \ref{fig:meshes}(c). More specifically, the vertices of the randomly perturbed mesh can be defined as
$$
\begin{array}{l}
x_{i,j}=\hat{x}_{i,j}-\frac{5}{4}h+\frac{\sqrt{2}}{3}hr_x^{i,j},\\[1ex]
y_{i,j}=\hat{y}_{i,j}-\frac{5}{4}h+\frac{\sqrt{2}}{3}hr_y^{i,j},
\end{array}
$$
where $r_x^{i,j}$ and $r_y^{i,j}$ are pseudo-random numbers uniformly distributed in the interval $(0,1)$.

\begin{figure}[t]
	\begin{center}
		\begin{minipage}[t]{0.33\textwidth}
			\begin{center}\includegraphics[scale=0.16]{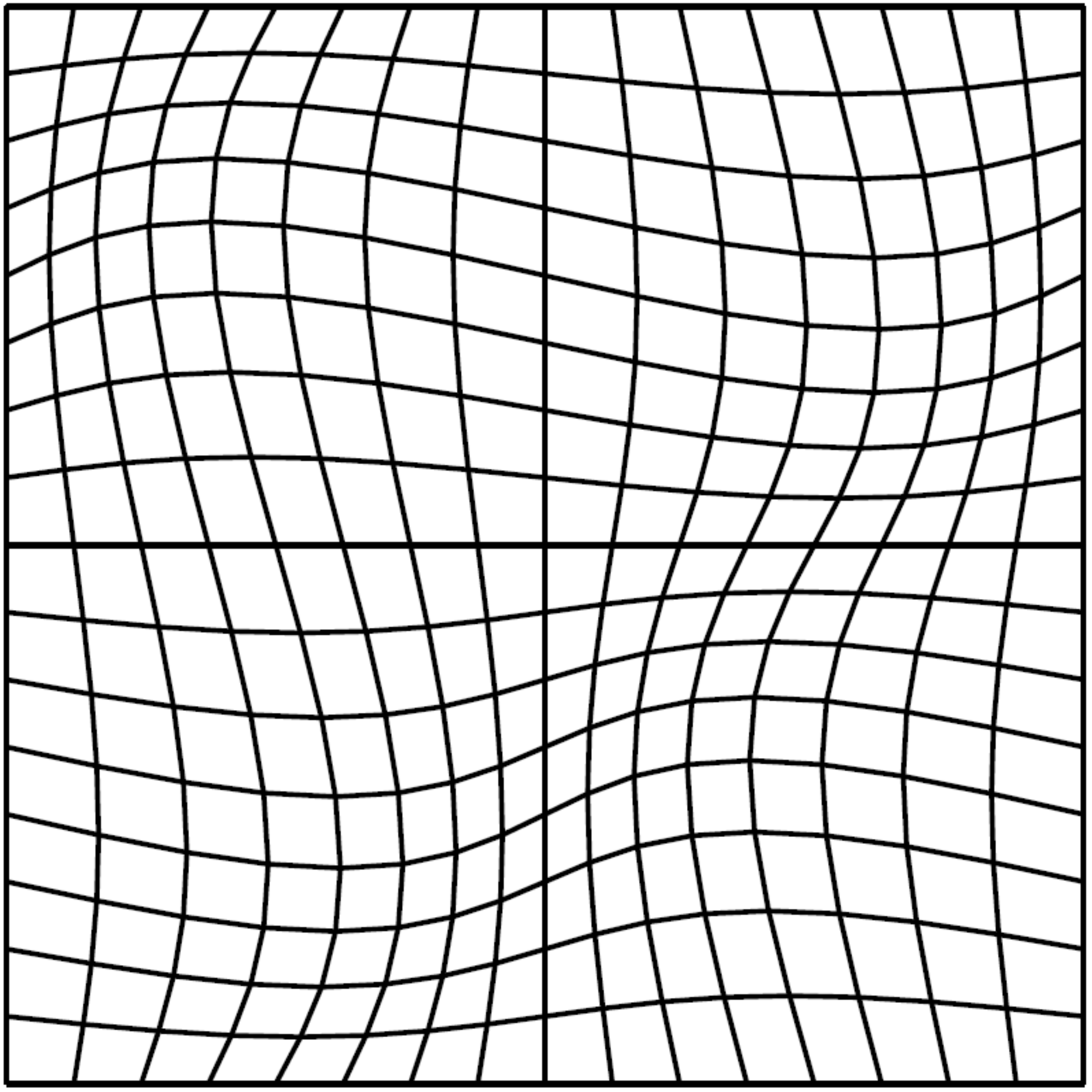}\\{\footnotesize (a) Smooth mesh}\end{center}
		\end{minipage}
		\hspace*{-0.25cm}
		\begin{minipage}[t]{0.33\textwidth}\begin{center}\includegraphics[scale=0.16]{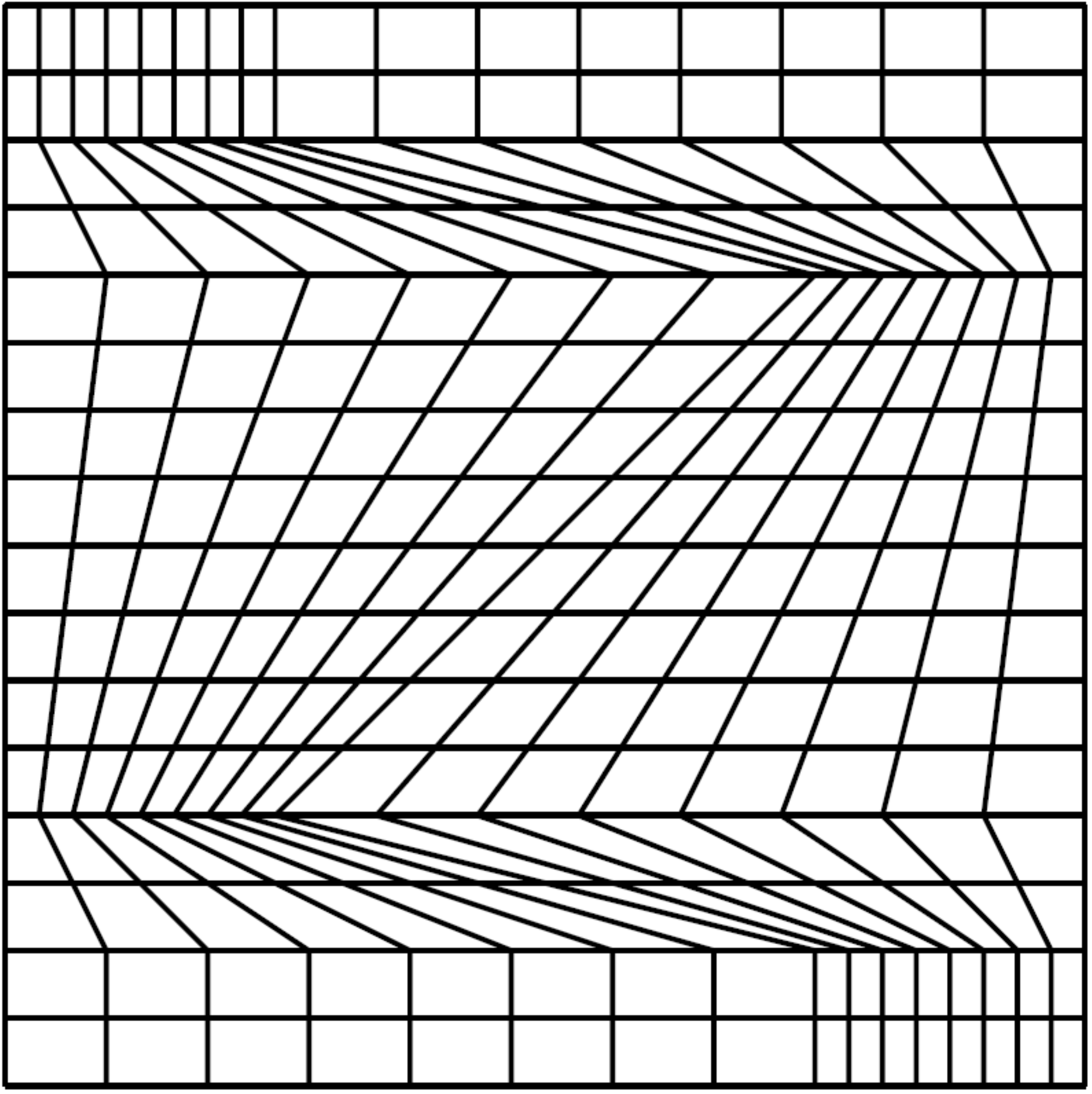}\\{\footnotesize (b) Kershaw mesh}\end{center}
		\end{minipage}
		\hspace*{-0.25cm}
		\begin{minipage}[t]{0.33\textwidth}\begin{center}\includegraphics[scale=0.16]{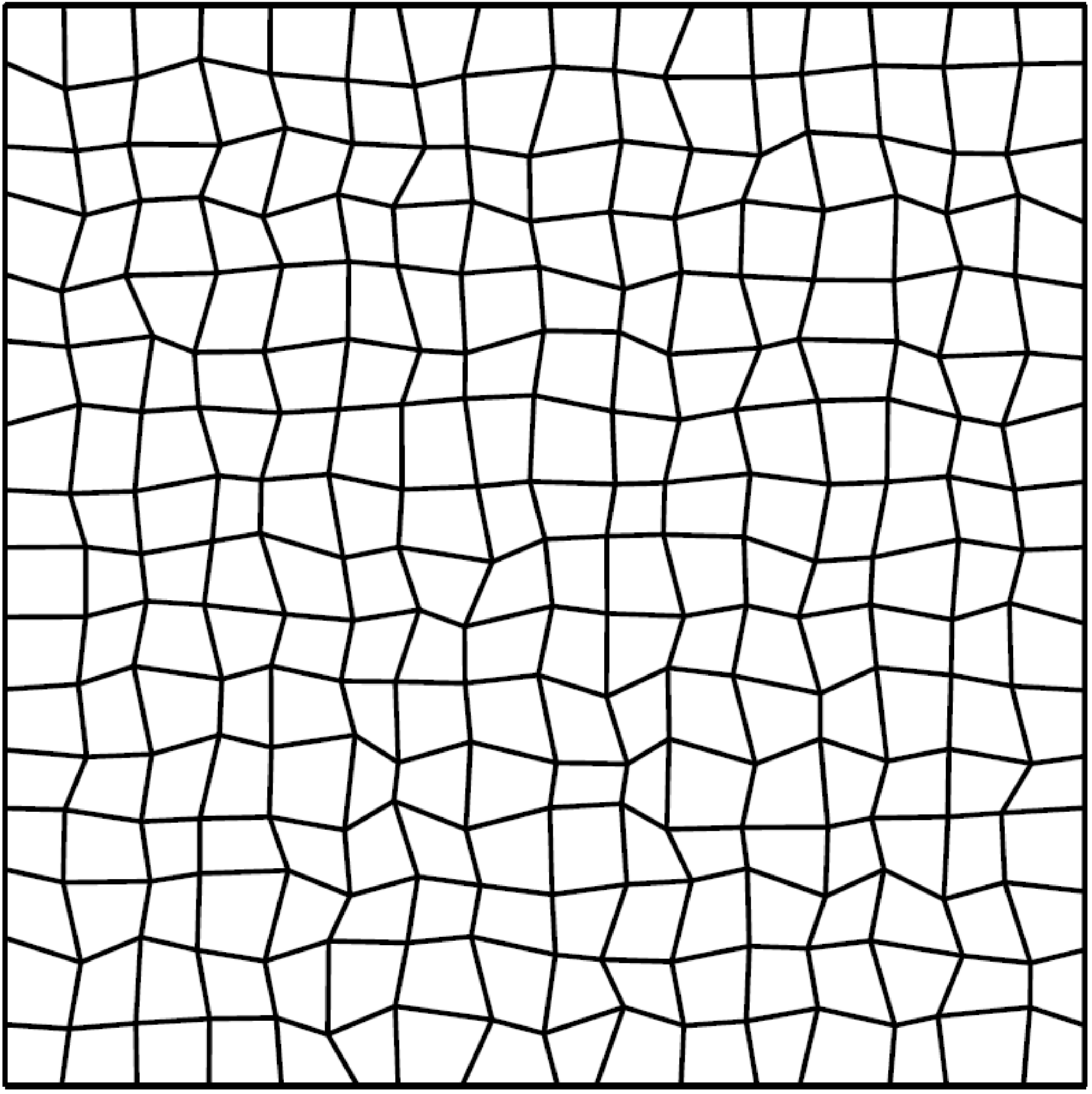}\\{\footnotesize (c) Randomly $h$-perturbed mesh}\end{center}
		\end{minipage}
	\end{center}\caption{Quadrilateral meshes for the numerical tests.}\label{fig:meshes}
\end{figure}

Next, we test the spatial convergence of the MFMFE method on the previous meshes. Taking into account that the time discretization error is negligible for this problem, we set the time step to a fixed value $\tau=0.1$. The spatial errors will be measured combining the $\ell^{\infty}$-norm in time with various norms in space, i.e.,
\begin{subequations}\label{norms}%\scriptsize
	\begin{align}
	E^p_{h,\tau}&:=\|p-p_h\|_{\ell^{\infty}(L^2)}=\textstyle\max_{0\leq n\leq N}\|p(t_{n+1})-p_h^{n+1}\|,\label{norm:p:1}\\%[-0.5ex]
	{E}^{\mathbf{u}}_{h,\tau}&:=\|\Pi_h \mathbf{u}-\mathbf{u}_h\|_{\ell^{\infty}(L^2)}=\textstyle\max_{0\leq n\leq N}\|\Pi_h \mathbf{u}(t_{n+1})-\mathbf{u}_h^{n+1}\|,\label{norm:u:1}\\%[-0.5ex]
	\widehat{E}^p_{h,\tau}&:=\|r_hp-P_h\|_{\ell^{\infty}(\ell^2)}=\textstyle\max_{0\leq n\leq N}\|r_hp(t_{n+1})-P_h^{n+1}\|_{\ell^2},\label{norm:p:2}\\%[-0.5ex]
	\widehat{E}^{\mathbf{u}}_{h,\tau}&:=\|\mathbf{u}-\mathbf{u}_h\|_{\ell^{\infty}(\mathcal{F}_h)}=\textstyle\textstyle\max_{0\leq n\leq N}\|\mathbf{u}(t_{n+1})-\mathbf{u}_h^{n+1}\|_{\mathcal{F}_h}.\label{norm:u:2}
	\end{align}
\end{subequations}
The integrals involved in the pressure errors (\ref{norm:p:1}) are approximated element-wise by a 9-point Gaussian quadrature formula. In turn, those involved in the velocity errors (\ref{norm:u:1}) are approximated by the trapezoidal quadrature rule. According to the theoretical results derived in Section \ref{section:analysis}, both errors (\ref{norm:p:1}) and (\ref{norm:u:1}) should show a fist-order convergent behaviour when the method is applied on $\mathcal{O}(h^2)$-parallelogram grids. Moreover, in order to check the convergence of the numerical pressure at the cell centers, we compute the errors (\ref{norm:p:2}), where $r_h$ denotes the restriction to the center of the cells. Finally, according to \cite{whe:xue:yot:12,arr:por:yot:14}, we shall study the convergence of the numerical velocity on the element edges. To this end, we obtain results in the edge-based norm (\ref{norm:u:2}), where $\|\cdot\|_{\mathcal{F}_h}$ is defined to be 
\[
\|\mathbf{v}\|^2_{\mathcal{F}_h}=\sum_{E\in\mathcal{T}_h}\sum_{e\in\partial E}\frac{|E|}{|e|}\,\|\mathbf{v}\cdot\mathbf{n}_e\|_e^2.
\]
Note that the integrals involved in the face errors (\ref{norm:u:2}) are computed by a high-order Gaussian quadrature rule.

Tables \ref{table:space:smooth} and \ref{table:space:kershaw} show the global errors and numerical orders of convergence in space of the method when applied on two families of $h^2$-parallelogram grids: the smooth meshes and the Kershaw-type meshes, respectively. As the theory predicts, we observe first-order convergence for the pressure and the velocity. Moreover, for both types of meshes, we obtain first-order convergence for the velocity on the element edges and second-order superconvergence for the pressure at the cell centers.

\begin{table}[t]
	\renewcommand{\arraystretch}{1.2}
	\centering{\small
		\begin{tabular}{l|cccccccc}
			\hline\\[-3ex]			
			$h$ & $E^p_{h,\tau}$ & Rate & $\hat{E}^p_{h,\tau}$ & Rate & $E^{\mathbf{u}}_{h,\tau}$ & Rate & $\hat{E}^{\mathbf{u}}_{h,\tau}$ & Rate \\
			\hhline{|---------|}
			$h_0$ & 2.233e-01 & -- & 7.994e-02 & -- & 1.317e{\small+}01& -- & 1.164e{\small +}01& -- \\
			$h_0/2$& 1.067e-01 & 1.065 & 2.023e-02 & 1.982 & 6.735e{\small +}00& 0.968 & 5.634e{\small +}00& 1.047 \\
			$h_0/2^2$& 5.258e-02 & 1.021 & 5.094e-03 & 1.990 & 3.390e{\small +}00& 0.990 & 2.786e{\small +}00& 1.016 \\
			$h_0/2^3$& 2.621e-02 & 1.004 & 1.277e-03 & 1.996 & 1.698e{\small +}00& 0.997 & 1.388e{\small +}00& 1.005 \\
			$h_0/2^4$& 1.309e-02 & 1.002 &  3.194e-04 & 1.999 & 8.491e-01& 1.000 &  6.936e-01& 1.001 \\
			\hline			
		\end{tabular}\caption{Global errors and numerical orders of convergence in space on the smooth meshes ($\tau=0.1$, $h_0=2^{-4}$).}\label{table:space:smooth}
	}
\end{table}

\begin{table}[t]	
	\renewcommand{\arraystretch}{1.2}
	\centering{\small
		\begin{tabular}{l|cccccccc}
			\hline\\[-3ex]			
			$h$ & $E^p_{h,\tau}$ & Rate & $\hat{E}^p_{h,\tau}$ & Rate & $E^{\mathbf{u}}_{h,\tau}$ & Rate & $\hat{E}^{\mathbf{u}}_{h,\tau}$ & Rate \\
			\hhline{|---------|}
			$h_0$ & 2.913e-01 & -- & 1.667e-01 & -- & 2.008e{\small+}01& -- & 1.922e{\small +}01& -- \\
			$h_0/2$& 1.572e-01 & 0.890 & 6.205e-02 & 1.426 & 1.149e{\small +}01& 0.805 & 1.053e{\small +}01& 0.868 \\
			$h_0/2^2$& 7.917e-02 & 0.990 & 1.631e-02 & 1.928 & 5.316e{\small +}00& 1.112 & 4.673e{\small +}00& 1.172 \\
			$h_0/2^3$& 3.945e-02 & 1.005 & 4.106e-03 & 1.990 & 2.502e{\small +}00& 1.087 & 2.098e{\small +}00& 1.155 \\
			$h_0/2^4$& 1.970e-02 & 1.002 &  1.030e-03 & 1.995 & 1.227e{\small +}00& 1.028 &  1.010e{\small +}00& 1.055 \\
			\hline			
		\end{tabular}\caption{Global errors and numerical orders of convergence in space on the Kershaw meshes ($\tau=0.1$, $h_0=2^{-4}$).}\label{table:space:kershaw}
	}
\end{table}

Table \ref{table:space:random:sym} shows the global errors and the numerical orders of convergence on the randomly $h$-perturbed grids. The numerical results show that the convergence of the velocity and the pressure of the MFMFE method deteriorates on these highly distorted grids. Similar numerical results are reported in \cite{whe:xue:yot:12} for the MFMFE method applied to the numerical solution of incompressible problems on randomly $h$-perturbed meshes.

\begin{table}[t]	
	\renewcommand{\arraystretch}{1.2}
	\centering{\small
		\begin{tabular}{l|cccccccc}
			\hline\\[-3ex]			
			$h$ & $E^p_{h,\tau}$ & Rate & $\hat{E}^p_{h,\tau}$ & Rate & $E^{\mathbf{u}}_{h,\tau}$ & Rate & $\hat{E}^{\mathbf{u}}_{h,\tau}$ & Rate \\
			\hhline{|---------|}
			$h_0$ & 2.216e-01 & -- & 7.709e-02 & -- & 1.261e{\small+}01& -- & 1.142e{\small +}01& -- \\
			$h_0/2$& 1.054e-01 & 1.072 & 2.331e-02 & 1.726 & 7.362e{\small +}00& 0.776 & 6.199e{\small +}00& 0.881 \\
			$h_0/2^2$& 5.285e-02 & 0.996 & 1.200e-02 & 0.958 & 4.919e{\small +}00& 0.582 & 4.127e{\small +}00& 0.587 \\
			$h_0/2^3$& 2.747e-02 & 0.944 & 9.733e-03 & 0.302 & 4.034e{\small +}00& 0.286 & 3.394e{\small +}00& 0.282 \\
			$h_0/2^4$& 1.601e-02 & 0.779 &  9.563e-03 & 0.025 & 3.861e{\small +}00& 0.063 &  3.263e{\small +}00& 0.057\\
			\hline			
		\end{tabular}}\caption{Global errors and numerical orders of convergence in space on the randomly $h$-perturbed meshes ($\tau=0.1$, $h_0=2^{-4}$).}\label{table:space:random:sym}
\end{table}

Following the ideas in \cite{kla:win:06,whe:xue:yot:12}, if the mesh is composed of highly distorted quadrilaterals or hexahedra, it is convenient to define a non-symmetric quadrature rule on each element $E\in\mathcal{T}_h$
\begin{equation*}
(K^{-1}\rho^{-1}(p)\,\mathbf{q},\mathbf{v})_{Q,E}:=(\tilde{\mathcal{K}}_E^{-1}\hat{\mathbf{q}},\hat{\mathbf{v}})_{\hat{Q},\hat{E}}:=
\frac{|\hat{E}|}{n_v}\sum_{i=1}^{n_v}\tilde{\mathcal{K}}_E^{-1}(\hat{\mathbf{r}}_i)\hat{\mathbf{q}}(\hat{\mathbf{r}}_i)\cdot\hat{\mathbf{v}}(\hat{\mathbf{r}}_i),
\end{equation*}
where
$
\tilde{\mathcal{K}}_E^{-1}(\hat{\mathbf{x}})=J_E^{-1}(\hat{\mathbf{x}})DF_E^T(\hat{\mathbf{x}}_{c})\overline{K}_E^{-1}\overline{\rho}_E^{-1}DF_E(\hat{\mathbf{x}})$.
Here $\overline{K}_E$ is a constant matrix such that $(\overline{K}_E)_{ij}$ is the mean value of $(K)_{ij}$ on $E$, $(\overline{K}_E)_{ij}$ and $(K)_{ij}$ being the elements on the $i$-th row and $j$-th column of matrices $\overline{K}_E$ and $K$, respectively. Similarly, $\overline{\rho}_E$ denotes the mean value of $\rho$ on $E$. Furthermore, $\hat{\mathbf{x}}_{c}$ denotes the center of mass of $\hat{E}$.
The transformation back to the physical element $E$ yields
\begin{equation}\label{nonsymmetric:quadrature:physical}
(K^{-1}\rho^{-1}(p)\,\mathbf{q},\mathbf{v})_{Q,E}=\frac{1}{n_v}\sum_{i=1}^{n_v}J_E(\hat{\mathbf{r}}_i)DF_E^{-T}(\mathbf{r}_i)DF_E^T(\hat{\mathbf{x}}_{c})
\overline{K}_{E}^{-1}\overline{\rho}_E^{-1}\mathbf{q}(\mathbf{r}_i)\cdot\mathbf{v}(\mathbf{r}_i).
\end{equation}
Given this local formula, the global quadrature rule is derived from (\ref{global:quadrature}).

Table \ref{table:space:random:non:sym} show the numerical results obtained when considering the non-symmetric MFMFE method derived by using the previous non-symmetric quadrature rule. Similarly to the case of incompressible flows \cite{whe:xue:yot:12,arr:por:yot:14}, the non-symmetric MFMFE method has first-order convergence for both the velocity and the pressure. Moreover, it shows first-order convergence for the velocity on the element edges and second-order superconvergence for the pressure at the cell centers. 

\begin{table}[t]
	\renewcommand{\arraystretch}{1.2}
	\centering{\small
		\begin{tabular}{l|cccccccc}
			\hline\\[-3ex]			
			$h$ & $E^p_{h,\tau}$ & Rate & $\hat{E}^p_{h,\tau}$ & Rate & $E^{\mathbf{u}}_{h,\tau}$ & Rate & $\hat{E}^{\mathbf{u}}_{h,\tau}$ & Rate \\
			\hhline{|---------|}
			$h_0$ & 2.226e-01 & -- & 7.797e-02 & -- & 1.289e{\small+}01& -- & 1.155e{\small +}01& -- \\
			$h_0/2$& 1.051e-01 & 1.083 & 1.915e-02 & 2.026 & 6.855e{\small +}00& 0.911 & 5.686e{\small +}00& 1.022 \\
			$h_0/2^2$& 5.201e-02 & 1.015 & 5.035e-03 & 1.927 & 3.409e{\small +}00& 1.008 & 2.782e{\small +}00& 1.031 \\
			$h_0/2^3$& 2.587e-02 & 1.008 & 1.220e-03 & 2.045 & 1.720e{\small +}00& 0.987 & 1.394e{\small +}00& 0.997 \\
			$h_0/2^4$& 1.293e-02 & 1.001 &  3.123e-04 & 1.966 & 8.584e-01& 1.003 &  6.954e-01& 1.003 \\
			\hline
	\end{tabular}}\caption{Global errors and numerical orders of convergence in space for the non-symmetric MFMFE method on the randomly $h$-perturbed meshes ($\tau=0.1$, $h_0=2^{-4}$).}\label{table:space:random:non:sym}
\end{table}

\subsection{Quarter five-spot problem}

A five-spot pattern is a standard configuration in petroleum engineering in which four input or injection wells are located at the corners of a square while the production well sits in the center. A certain fluid, which is normally water, steam or gas, is injected simultaneously through the four input wells to displace the oil towards the central production well. Due to the symmetry of the problem, it is usual to consider only a quarter five-spot pattern on the unit square with injection and production wells located at (0,0) and (1,1), respectively. Following \cite{cha:pie:for:17}, we consider the following source term model
\begin{align*}f(x,y)=200\,&(\tanh(200(0.025-(x^2+y^2)^{1/2}))\\&-\tanh(200(0.025-((x-1)^2+(y-1)^2)^{1/2})))
\end{align*}
and  impose homogeneous Dirichlet and Neumann boundary conditions on
$$
\Gamma_D=\{(x,y)\in\partial \Omega: x=1 \hbox{ and } y\leq3/4\}\cup  \{(x,y)\in\partial \Omega: y=1 \hbox{ and } x\leq3/4\}
$$
and $\Gamma_N=\partial \Omega\backslash\Gamma_D$, respectively.

Next, we consider a regular mesh with 128 cells in each direction and a constant time step $\tau=5\times 10^{-3}$. We choose the same parameter values as in the previous example and we define the initial condition to be $p_0(x,y)=(1-3 x^2 + 2 x^3) (1 - 3 y^2 + 2 y^3).$ In this framework, we consider different permeability tensors and we show plots of both $p_h$ and $\log|\mathbf{u}_h|$ once the stationary state is reached.

To start with, we consider a full constant permeability tensor
$$\hat{K}(x,y)=\begin{bmatrix}
4 & 0.5 \\
0.5 & 4
\end{bmatrix}.$$
Figure \ref{fig:qfs:full:K}(a) shows the numerical pressure (left) and the logarithm of the norm of the numerical velocity (right), which are symmetric about the diagonal $y=x$, as expected for this homogeneous permeability tensor. In fact, the pressure plot is qualitatively similar to that shown in \cite{cha:pie:for:17}, where an incompressible model is considered. 

In order to check the behaviour of the method when considering a discontinuous full permeability tensor, we consider $\hat{K}$ to be defined as
$$\hat{K}(x,y)=\begin{cases}\,\begin{bmatrix}
16 & 0.5 \\
0.5 & 16
\end{bmatrix},& \hbox{if } x\leq 0.5,\\[3ex]
\,\begin{bmatrix}
4 & 0.5 \\
0.5 & 4
\end{bmatrix},& \hbox{otherwise.}
\end{cases}$$ 
The numerical results are shown in Figure \ref{fig:qfs:full:K}(b). For this  heterogeneous field, the pressure and  the
flow field are no longer symmetric since the fluid seeks to flow in the most high-permeable region.
\begin{figure}[t]
	\begin{center}
		\includegraphics[scale=0.15]{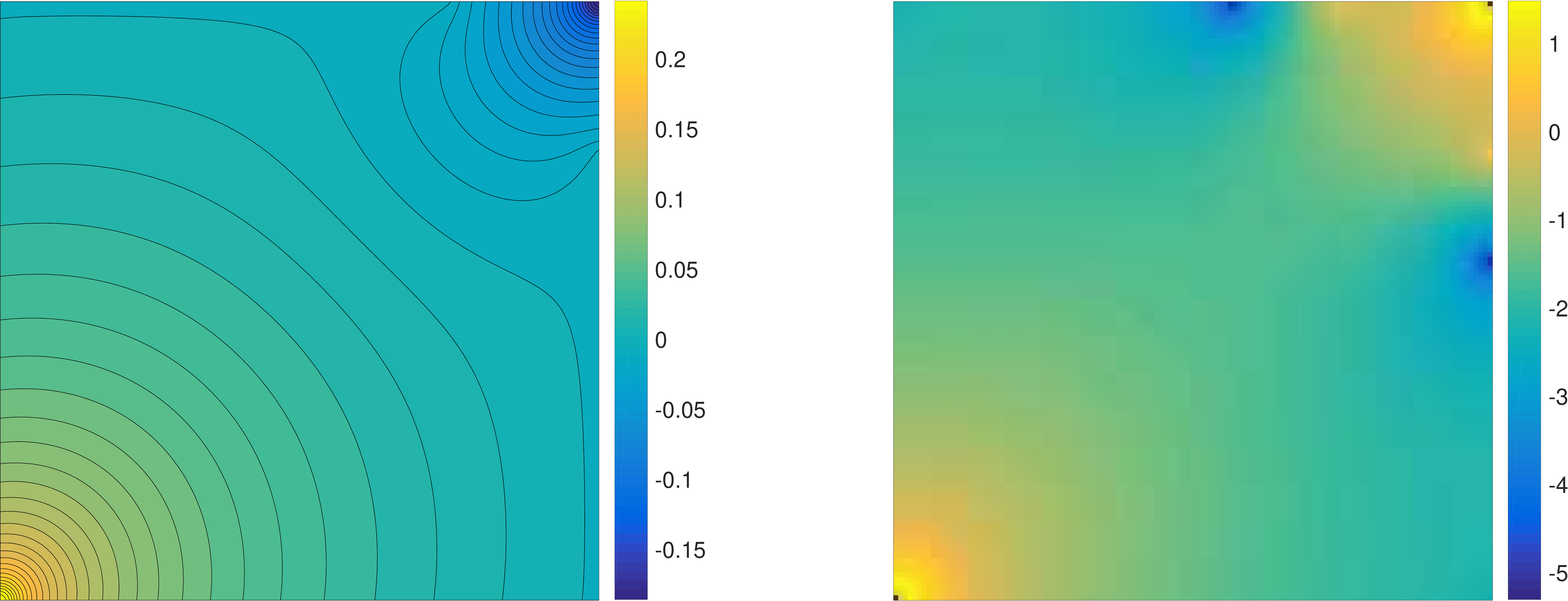}\\[0.2cm]{\footnotesize (a) Full constant permeability tensor}\\[0.7cm]		
		\includegraphics[scale=0.15]{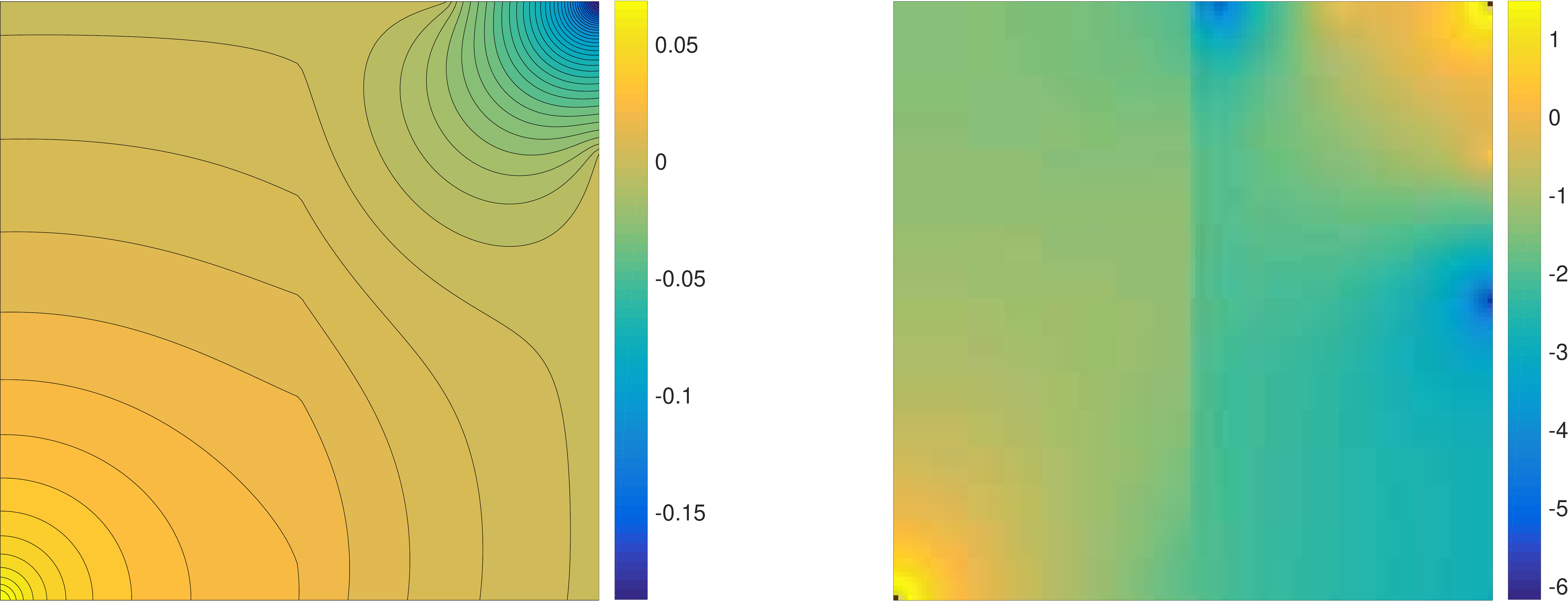}\\[0.2cm]{\footnotesize (b) Full piecewise constant permeability tensor}
	\end{center}\caption{Numerical pressure (left) and logarithm of the norm of the numerical velocity (right) for the quarter five-spot problem considering two full permeability tensors.}\label{fig:qfs:full:K}
\end{figure}
\begin{figure}[t]
	\begin{center}
		\includegraphics[scale=0.15]{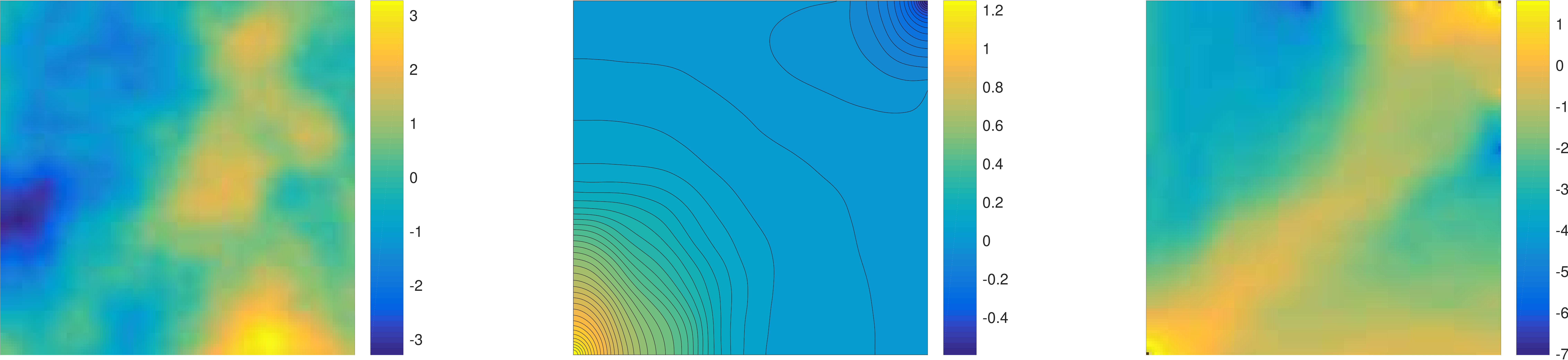}\\[0.2cm]{\footnotesize (a) $(\nu,r,\sigma^2)=(1.5,0.3,1)$}\\[0.7cm]
		\includegraphics[scale=0.15]{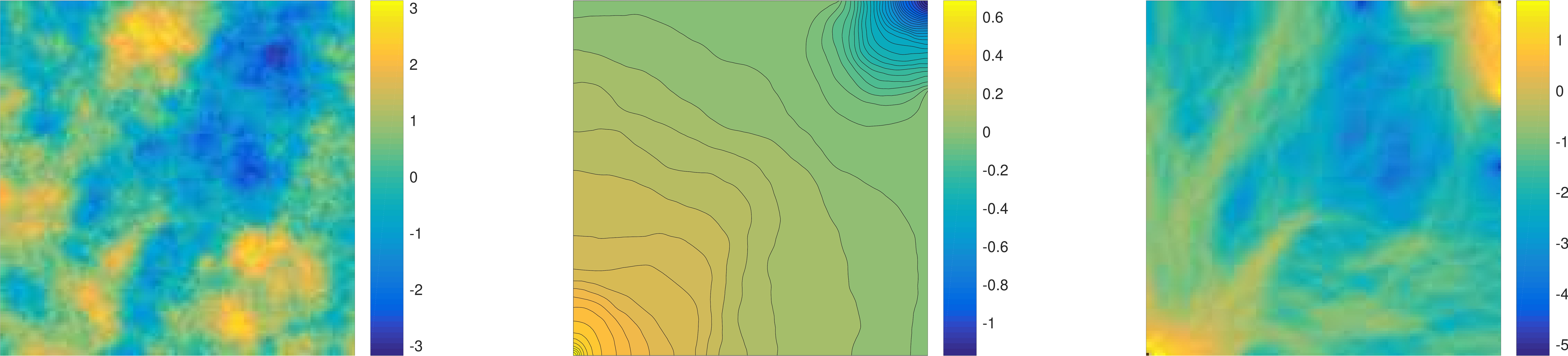}\\[0.2cm]{\footnotesize (b) $(\nu,r,\sigma^2)=(0.5,0.3,1)$}\\[0.7cm]
		\includegraphics[scale=0.15]{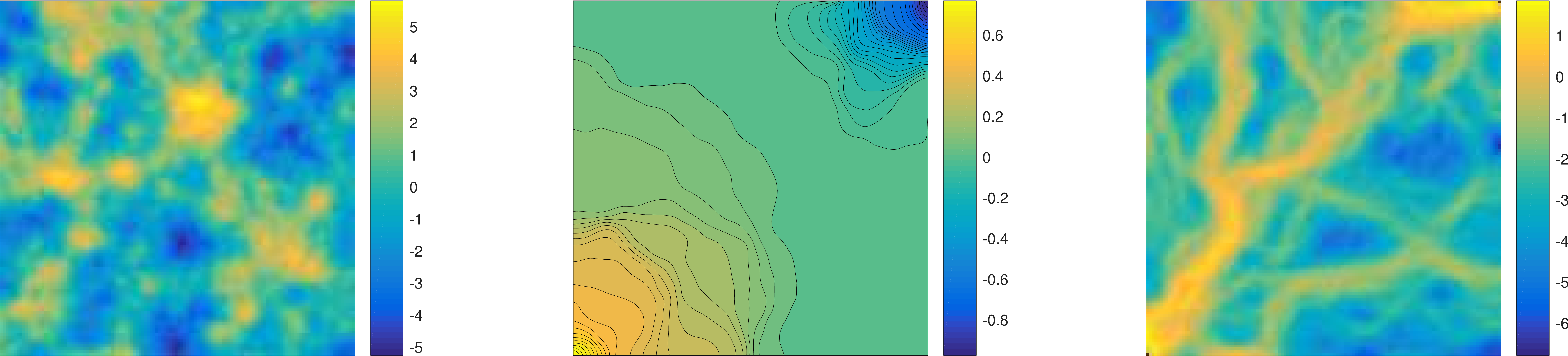}\\[0.2cm]{\footnotesize (c) $(\nu,r,\sigma^2)=(1.5,0.1,3)$}\\[0.7cm]
		\includegraphics[scale=0.15]{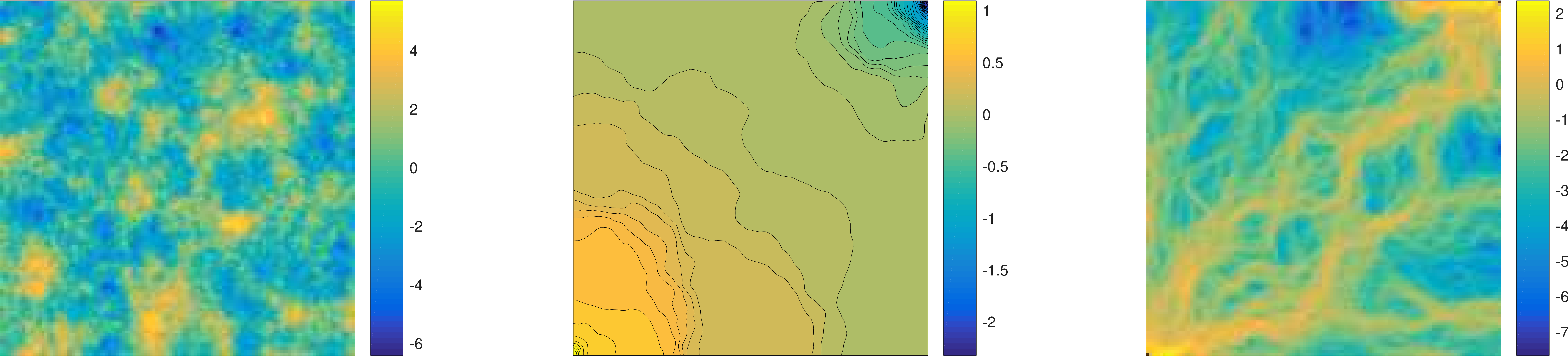}\\[0.2cm]{\footnotesize (d) $(\nu,r,\sigma^2)=(0.5,0.1,3)$}		
	\end{center}\caption{Logarithm of the permeability coefficient (left), numerical pressure (center) and logarithm of the norm of the numerical velocity (right) for the quarter five-spot problem considering four random permeability fields.}\label{fig:qfs:random}
\end{figure}

Finally, we check the behaviour of the method when considering random permeability fields $\hat{K}(x,y)=\hat{k}(x,y)I$. It is well known that  the permeability of a heterogeneous porous medium may be accurately represented by a log-normally distributed random field \cite{fre:75}.  To generate samples of the Gaussian random $\log \hat{k}$, we use the following Mat\'{e}rn-type covariance function \cite{mat:86,min:mcb:05}
$$
C(h)=\frac{\sigma^2}{2^{\nu-1}\Gamma(\nu)}\left(\frac{2\sqrt{\nu}\, h}{r}\right)^{\nu}K_{\nu}\left(\frac{2\sqrt{\nu}\,h}{r}\right),
$$
where $h$ is the separation distance, $K_{\nu}$ is a modified Bessel function of the second kind of order $\nu$, $\Gamma$ is the gamma function, $r$ is the range or distance parameter ($r>0$) which measures how quickly the correlations decay with distance, and $\nu$ is the smoothness parameter ($\nu>0$). One of the main features of the Mat\'{e}rn model is the existence of this parameter $\nu$ which controls the smoothness of the random field, as we shall see below.

In the sequel, we consider four combinations of parameters $\nu$, $r$ and $\sigma^2$, namely $$(\nu,r,\sigma^2)\in\left\{(1.5,0.3,1),(0.5,0.3,1),(1.5,0.1,3),(0.5,0.1,3)\right\}.$$
Figures \ref{fig:qfs:random}(a)-(d) show the logarithm of $\hat{k}$ (left), the pressure (center) and the logarithm of the norm of the velocity (right) for the four sets of parameters stated above. Regarding the variance, Figures \ref{fig:qfs:random}(a), \ref{fig:qfs:random}(b) correspond to permeabilities with small variations, whereas Figures  \ref{fig:qfs:random}(c), \ref{fig:qfs:random}(d) deal with large fluctuations on the permeability fields. Attending to the smoothness, Figures \ref{fig:qfs:random}(a), \ref{fig:qfs:random}(c) consider smooth permeabilities, while Figures \ref{fig:qfs:random}(b), \ref{fig:qfs:random}(d) deal with abrupt permeabilities. In all the cases, the numerical solutions provided by the MFMFE method are satisfactory and show the expected physical behaviour. In this sense, note that, as the smoothness parameter $\nu$ is reduced and/or the variance $\sigma^2$ is increased, the differences between the pressure and velocity fields shown in Figures \ref{fig:qfs:random}(a)-(d) and those for the homogeneous model shown in Figure \ref{fig:qfs:full:K}(a) are wider. Remarkably, the randomness patterns of the permeability coefficient displayed on the left of Figure \ref{fig:qfs:random} are clearly reflected on the corresponding velocity fields shown on the right.

\section*{Acknowledgements}
This work was partially supported by MINECO grant MTM2014-52859-P. The authors gratefully acknowledge Prof. Mary F. Wheeler for suggesting them the subject of this paper.

%\nocite{*}

%\section*{References}

\bibliography{mybibfile}

\end{document}